\documentclass{amsart}
\usepackage[all,cmtip]{xy}
\usepackage{enumerate, comment}
\usepackage{mathrsfs}
\usepackage{ amssymb, latexsym, amsmath}
\usepackage{fullpage}
\usepackage{url}
\usepackage{hyperref}
\usepackage{helvet}
\usepackage{amsfonts}
\usepackage{tikz}
\usepackage{commath}
\usepackage{amsthm}
\usepackage{amscd}
\usepackage{stmaryrd}
\usepackage[mathscr]{eucal}
\usepackage{indentfirst}
\usepackage{graphicx}
\usepackage{tikz-cd}
\usepackage{graphics}
\usepackage{pict2e}
\usepackage{epic}
\usepackage{color}
\usepackage{mathtools}
\usepackage[normalem]{ulem}
\usepackage[OT2,T1]{fontenc}

\DeclarePairedDelimiter\floor{\lfloor}{\rfloor}

\newcommand{\closure}[2][3]{%
{}\mkern#1mu\overline{\mkern-#1mu#2}}
\numberwithin{equation}{section}
\usepackage[margin=3.4cm]{geometry}
 
\DeclareSymbolFont{cyrletters}{OT2}{wncyr}{m}{n}
\DeclareMathSymbol{\Sha}{\mathalpha}{cyrletters}{"58}

\newcommand{\edv}{\mathrel\Vert}

\newcommand{\fp}{\mathfrak{p}}
\newcommand{\cE}{\mathcal{E}}

\newcommand{\cyc}{\op{cyc}}

\newcommand{\bad}{\op{bad}}
\newcommand{\good}{\op{good}}
\newcommand{\ord}{\op{ord}}
\newcommand{\corank}{\op{corank}}
\newcommand{\tr}{\op{tr}}
\newcommand{\Gal}{\op{Gal}}
\newcommand{\Sel}{\op{Sel}}

\newcommand{\Z}{\mathbb{Z}}
\newcommand{\Q}{\mathbb{Q}}
\newcommand{\F}{\mathbb{F}}

\newcommand{\mup}{\mu_p(E)}
\newcommand{\lambdap}{\lambda_p(E)}

\newcommand{\Selp}{\Sel_{p^{\infty}}(E/\Q^{\cyc})}

\newcommand{\Selpm}{\Sel_{p^{\infty}}^{\pm}(E/\Q^{\cyc})}
\newcommand{\Selast}{\Sel_{p^{\infty}}^{\ddag}(E/\Q^{\cyc})}
\newcommand{\op}[1]{\operatorname{#1}}

\theoremstyle{plain}
 \theoremstyle{definition}
\newtheorem{Th}{Theorem}[section]
\newtheorem{Lemma}[Th]{Lemma}

\newtheorem{Corollary}[Th]{Corollary}
\newtheorem{Proposition}[Th]{Proposition}
\newtheorem{Remark}[Th]{Remark}
 \theoremstyle{definition}
\newtheorem{Definition}[Th]{Definition}
\newtheorem{Conjecture}[Th]{Conjecture}
\newtheorem*{lconj}{Conjecture}

\begin{document}

\title{Statistics for Iwasawa invariants of elliptic curves}
\author{Debanjana Kundu}
\address{Department of Mathematics \\ University of British Columbia \\
  Vancouver BC, V6T 1Z2, Canada.} 
  \email{dkundu@math.ubc.ca}
\author{Anwesh Ray}
\address{Department of Mathematics \\ University of British Columbia \\
  Vancouver BC, V6T 1Z2, Canada.} 
\email{anweshray@math.ubc.ca}

\subjclass[2010]{Primary 11R23, 11R45, 11G05}

\date{\today}

\keywords{Iwasawa invariants, Selmer groups of elliptic curves}

\begin{abstract}
We study the average behaviour of the Iwasawa invariants for the Selmer groups of elliptic curves, setting out new directions in arithmetic statistics and Iwasawa theory. \end{abstract}

\maketitle
\section{Introduction}
Iwasawa theory began as the study of class groups over infinite towers of number fields.
In \cite{mazur72}, B. Mazur initiated the study of Iwasawa theory of elliptic curves.
The main object of study is the $p$-primary Selmer group of an elliptic curve $E$, taken over the cyclotomic $\Z_p$-extension of $\Q$.
Mazur conjectured that when $p$ is a prime of good ordinary reduction, the $p$-primary Selmer group is cotorsion as a module over the Iwasawa algebra, denoted by $\Lambda$.
This conjecture was settled by K. Kato, see \cite[Theorem 17.4]{Kat04}.
\par
Note that the Iwasawa algebra $\Lambda$ is isomorphic to the power series ring $\Z_p\llbracket T\rrbracket$. The algebraic structure of the Selmer group (as a $\Lambda$-module) is encoded by certain invariants which have been extensively studied.
First consider the case when $E$ has good ordinary reduction at $p$.
By the $p$-adic Weierstrass Preparation Theorem, the characteristic ideal of the Pontryagin dual of the Selmer group is generated by a unique element $f_E^{(p)}(T)$, that can be expressed as a power of $p$ times a distinguished polynomial.
The $\mu$-invariant is the power of $p$ dividing $f_E^{(p)}(T)$ and the $\lambda$-invariant is its degree.
R. Greenberg has conjectured that when the residual representation on the $p$-torsion subgroup of $E$ is irreducible, then the $\mu$-invariant of the Selmer group vanishes, see \cite[Conjecture 1.11]{Gre99}.
Further, if the $p$-primary part of the Tate-Shafarevich group, denoted by $\Sha(E/\Q)[p^\infty]$, is finite, then one can show that the $\lambda$-invariant is at least as large as the Mordell-Weil rank of $E$ (see Lemma $\ref{lemma32}$).
However, this $\lambda$ may indeed be strictly larger than the rank, and one of our main objectives is to determine its behaviour on average .

When $E$ has supersingular reduction at $p$, the Selmer group is not $\Lambda$-cotorsion. 
This makes the analysis of the algebraic structure of the torsion part of the Selmer group particularly difficult.
Instead, we consider the plus and minus Selmer groups introduced by S. Kobayashi in \cite{Kob03}, which are known to be $\Lambda$-cotorsion.
The Iwasawa invariants $\mu^{+}$ and $\lambda^{+}$ (resp. $\mu^{-}$ and $\lambda^{-}$) of the plus (resp. minus) Selmer group are defined in an analogous manner.
In the supersingular case as well, there is much computational evidence towards the conjecture that the $\mu$-invariants $\mu^{+}$ and $\mu^-$ vanish.
Once again, under standard hypotheses on the Tate-Shafarevich group, both $\lambda^+$ and $\lambda^-$ are known to be greater than or equal to the Mordell-Weil rank of $E$ (see Lemma $\ref{lemma32}$).

\par The main goal of this article is prove results about the variation of the Iwasawa invariants as the pair $(E,p)$ varies such that $E$ has good reduction (ordinary or supersingular) at $p$.
More precisely, we analyze the following two separate but interrelated problems.
\begin{enumerate}
 \item 
For a fixed elliptic curve $E$, how do the Iwasawa invariants vary as $p$ varies over all odd primes $p$ at which $E$ has good reduction?
 \item 
 For a fixed prime $p$, how do the Iwasawa invariants vary as $E$ varies over all elliptic curves (with good reduction at $p$)?
\end{enumerate}

Greenberg studied the first question when $E_{/\Q}$ has rank zero and $p$ varies over the primes of good ordinary reduction (see \cite[Theorems 4.1 and 5.1]{Gre99}).
In Theorem \ref{theorem2}, we generalize this result to include the case of supersingular primes.
We show that a conjecture of J. Coates and R. Sujatha on the vanishing of $\mu$-invariants of fine Selmer groups holds for density one primes (see Corollary \ref{corollary: conjecture A}).
Under natural assumptions, we prove similar results for higher rank elliptic curves (see Theorems \ref{theorem3} and \ref{theorem4}).
The results in both ordinary and supersingular cases lead us to make the following conjecture (which is proved for elliptic curves of rank zero).
\begin{lconj}
Let $E_{/\Q}$ be an elliptic curve of rank $r_E$.
For $100\%$ of the primes $p$ at which $E$ has good ordinary reduction (resp. supersingular), $\mu=0$ and $\lambda=r_E$ (resp. $\mu^+=\mu^-=0$ and $\lambda^{+}=\lambda^{-}=r_E$).
\end{lconj}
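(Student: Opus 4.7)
The plan is to express $(\mu_p(E),\lambda_p(E))$ (and the plus/minus analogues) in terms of local and arithmetic data attached to $E$ at $p$, and to verify that for density-one sets of primes these data force $\mu=0$ and $\lambda=r_E$. Throughout I would impose the standard hypothesis that $\Sha(E/\Q)$ is finite, so that by Lemma \ref{lemma32} the lower bound $\lambda\geq r_E$ (and $\lambda^{\pm}\geq r_E$) is automatic at every prime of good reduction; the real content is the matching upper bound together with the vanishing of $\mu$.

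First I would fix a set $\mathscr{S}(E)$ of ``admissible'' primes $p$ by requiring (i) $p\nmid \prod_{\ell\mid N_E}c_\ell(E)$, (ii) $p\nmid \#\Sha(E/\Q)\cdot \#E(\Q)_{\op{tors}}$, (iii) the residual representation $\overline{\rho}_{E,p}$ is absolutely irreducible and $p$ is non-anomalous (i.e.\ $p\nmid \#\widetilde{E}(\F_p)$), and (iv) the cyclotomic $p$-adic regulator $R_p(E)$ is a $p$-adic unit. Conditions (i)--(ii) exclude only finitely many primes; (iii) excludes a density-zero set, by Serre's open-image theorem together with the Lang-Trotter prediction for $a_p=1$ (the latter being the delicate input); (iv) is the cyclotomic $p$-adic height problem, known for $r_E\leq 1$ and widely expected to hold on a density-one set in general. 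Granting these inputs, $\mathscr{S}(E)$ has density one among the primes of good reduction at $E$.

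At each $p\in\mathscr{S}(E)$ of good ordinary reduction I would invoke the Perrin-Riou/Schneider $p$-adic Euler-characteristic formula: under finite $\Sha$, good ordinary reduction, and non-degeneracy of the cyclotomic $p$-adic height pairing, one can factor $f_E^{(p)}(T)=T^{r_E}\,g(T)$ in $\Lambda$, with
\begin{equation*}
g(0)\;\sim\;\frac{\#\Sha(E/\Q)[p^{\infty}]\cdot \prod_{\ell}c_\ell(E)\cdot R_p(E)}{(\#E(\Q)_{\op{tors}})^{2}}\qquad (\text{up to }\Z_p^{\times}),
\end{equation*}
the implicit local factor at $p$ being a $p$-adic unit by (iii). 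Conditions (i), (ii), (iv) force the right-hand side to lie in $\Z_p^{\times}$, whence $g\in \Lambda^{\times}$; therefore its distinguished-polynomial degree is zero and no power of $p$ divides it. This gives $\lambda_p(E)=r_E$ and $\mu_p(E)=0$ simultaneously. The supersingular case runs identically, with Kobayashi's control theorem replacing Mazur's and the Kim-Kobayashi plus/minus Euler-characteristic formula in place of Perrin-Riou's, yielding $\mu^{\pm}=0$ and $\lambda^{\pm}=r_E$.

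The main obstacle is condition (iv). The unit-ness of $R_p(E)$ for density-one primes is a strong form of Schneider's non-degeneracy conjecture for the cyclotomic $p$-adic height pairing, and is completely open for $r_E\geq 2$; without it one cannot even rule out an exceptional zero of $f_E^{(p)}$ of order strictly greater than $r_E$, and the entire argument collapses. A secondary (but serious) obstacle is condition (ii): density-one vanishing of $p$-parts of Tate-Shafarevich groups requires averaging results of Delaunay/Bhargava-Kane-Lenstra-Poonen-Rains type that are presently only partially established. Both difficulties evaporate when $r_E=0$ -- the regulator is $1$ by convention and Mordell-Weil contributes nothing, so the Perrin-Riou formula already delivers the conclusion from (i)--(iii) -- which is precisely why the paper is able to establish the conjecture unconditionally only in rank zero, and why any extension to $r_E\geq 1$ appears to demand genuinely new input on the statistical behaviour of $p$-adic heights.
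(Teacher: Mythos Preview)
Your outline matches the paper's own treatment: the displayed statement is a \emph{conjecture}, established unconditionally only for $r_E=0$ (Theorems~\ref{theorem1} and~\ref{theorem2}), with Theorems~\ref{theorem3} and~\ref{theorem4} giving exactly the conditional extension to $r_E\geq 1$ via the Perrin-Riou/Schneider leading-term formula you invoke. You correctly isolate the $p$-adic regulator term as the genuine obstruction, and your rank-zero endgame is the paper's.

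A few inaccuracies are worth flagging. First, your ``secondary obstacle'' (ii) is not one: you have already assumed $\Sha(E/\Q)$ finite, so $p\nmid\#\Sha(E/\Q)$ excludes only finitely many primes, as you yourself note two paragraphs earlier; the Delaunay/BKLPR heuristics belong to the orthogonal problem of fixing $p$ and varying $E$. Second, density zero of anomalous primes is an unconditional theorem (Serre, via Chebotarev and open image, or now Sato--Tate), not contingent on Lang--Trotter; the latter only refines the count to $C\sqrt{X}/\log X$. Third, in the supersingular positive-rank case the analogue of the Perrin-Riou/Schneider formula is itself conjectural---the paper records it as Conjecture~\ref{supersingularbsd}---so ``runs identically'' understates the extra input required there. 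Finally, a normalization point: the leading-term formula in Theorem~\ref{pbsdconj} carries $R_p(E/\Q)/p^{r_E}$, so your condition (iv) should read $v_p\!\big(R_p(E/\Q)\big)=r_E$ rather than that $R_p$ itself is a $p$-adic unit; this is the paper's set $\Pi$.
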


The second question is at the intersection of arithmetic statistics and Iwasawa theory.
The area of arithmetic statistics concerns the behaviour of number theoretic objects in families, and offers a probabilistic model that seeks to explain numerous phenomena in the statistical behaviour of Selmer groups.
The investigations in this paper show that there is promise in the analysis of the average behaviour of Iwasawa invariants.
The main results we prove are Theorems $\ref{theorem6}$, $\ref{supersingulartheorem}$ and $\ref{main result rank 0 good reduction}$.
Our results indicate that it is reasonable to expect that for a fixed prime $p$, as we vary over all rank 0 elliptic curves over $\Q$ with good ordinary (resp. supersingular) reduction at $p$ ordered by height, a positive proportion of them have trivial $p$-primary Selmer group (considered over the cyclotomic $\Z_p$-extension of $\Q$).
In fact, the results suggest that the proportion of elliptic curves of rank $0$ with trivial $p$-primary Selmer group approaches $100\%$ as $p\rightarrow \infty$ (see Conjecture $\ref{lastconj}$).

\section{Background and Preliminaries}

\subsection{} Let $\Gamma:=\Gal(\Q^{\cyc}/\Q)\simeq \Z_p$.
The Iwasawa algebra $\Lambda$ is the completed group algebra $\Z_p\llbracket \Gamma \rrbracket :=\varprojlim_n \Z_p[\Gamma/\Gamma^{p^n}]$.
After fixing a topological generator $\gamma$ of $\Gamma$, there is an isomorphism of rings $\Lambda\cong\Z_p\llbracket T\rrbracket $, by sending $\gamma -1$ to the formal variable $T$.

\par Let M be a cofinitely generated cotorsion $\Lambda$-module.
The \emph{Structure Theorem of $\Lambda$-modules} asserts that the Pontryagin dual of M, denoted by $\rm{M}^{\vee}$, is pseudo-isomorphic to a finite direct sum of cyclic $\Lambda$-modules.
In other words, there is a map of $\Lambda$-modules
\[
\textrm{M}^{\vee}\longrightarrow  \left(\bigoplus_{i=1}^s \Lambda/(p^{m_i})\right)\oplus \left(\bigoplus_{j=1}^t \Lambda/(h_j(T)) \right)
\]
with finite kernel and cokernel.
Here, $m_i>0$ and $h_j(T)$ is a distinguished polynomial (i.e. a monic polynomial with non-leading coefficients divisible by $p$).
The characteristic ideal of $\rm{M}^\vee$ is (up to a unit) generated by the characteristic element,
\[
f_{\rm{M}}^{(p)}(T) := p^{\sum_{i} m_i} \prod_j h_j(T).
\]
The $\mu$-invariant of M is defined as the power of $p$ in $f_{\rm{M}}^{(p)}(T)$.
More precisely,
\[
\mu_p(\textrm{M}):=\begin{cases}0 & \textrm{ if } s=0\\
\sum_{i=1}^s m_i & \textrm{ if } s>0.
\end{cases}
\]
The $\lambda$-invariant of M is the degree of the characteristic element, i.e.
\[
\lambda_p(\textrm{M}) := \sum_{j=1}^t \deg h_j(T).
\]

\subsection{} Let $E$ be an elliptic curve over $\Q$ with good reduction at $p$. It shall be assumed throughout that the prime $p$ is odd.
Let $N$ denote the conductor of $E$ and set $S$ to denote the set of primes which divide $Np$. Let $\Q_S$ be the maximal algebraic extension of $\Q$ which is unramified at the primes $v\notin S$. Set $E[p^\infty]$ to be the Galois module of all $p$-power torsion points in $E(\closure{\Q})$.

\par First, consider the case when $E$ has good (ordinary or supersingular) reduction at $p$.
Let $v$ be a prime in $S$.
For any finite extension $L/\Q$ contained in $\Q^{\cyc}$, write
\[
J_v(E/L) = \bigoplus_{w|v} H^1\left( L_w, E\right)[p^\infty]
\]
where the direct sum is over all primes $w$ of $L$ lying above $v$.
Then, the \emph{$p$-primary Selmer group over $\Q$} is defined as follows
\[
\Sel_{p^\infty}(E/\Q):=\ker\left\{ H^1\left(\Q_S/\Q,E[p^{\infty}]\right)\longrightarrow \bigoplus_{v\in S} J_v(E/\Q)\right\}.
\]
This Selmer group fits into a short exact sequence 
\begin{equation}
\label{sesSelmer}
0\rightarrow E(\Q)\otimes \Q_p/\Z_p\rightarrow \Sel_{p^{\infty}}(E/\Q)\rightarrow \Sha(E/\Q)[p^{\infty}]\rightarrow 0,
\end{equation}
see \cite{CS00book}.
Here, $\Sha(E/\Q)$ is the \emph{Tate-Shafarevich group}
\[\Sha(E/\Q):=\left\{H^1(\closure{\Q}/\Q, E[p^{\infty}])\rightarrow \prod_{l} H^1(\closure{\Q}_l/\Q_l, E[p^{\infty}])\right\}.\]
Now, define
\[
J_v(E/\Q^{\cyc}) = \varinjlim J_v(E/L)
\]
where $L$ ranges over the number fields contained in $\Q^{\cyc}$ and the inductive limit is taken with respect to the restriction maps.
Taking direct limits, the \emph{$p$-primary Selmer group over $\Q^{\cyc}$} is defined as follows
\[
\Selp:=\ker\left\{ H^1\left(\Q_S/\Q^{\cyc},E[p^{\infty}]\right)\longrightarrow \bigoplus_{v\in S} J_v(E/\Q^{\cyc})\right\}.
\]

As mentioned in the introduction, when $p$ is a prime of good supersingular reduction, the Pontryagin dual of $\Selp$ is not $\Lambda$-torsion.
In this case, one studies the plus and minus Selmer groups which we describe below.

Let $E_{/\Q}$ be an elliptic curve with supersingular reduction at $p$.
Denote by $\Q_n^{\cyc}$ the $n$-th layer in the cyclotomic $\Z_p$-extension, with $\Q_0^{\cyc}:=\Q$.
Set $\widehat{E}$ to be the formal group of $E$ over $\Z_p$.
Let $L$ be a finite extension of $\Q_p$ with valuation ring $\mathcal{O}_L$, let $\widehat{E}(L)$ denote $\widehat{E}(\mathfrak{m}_L)$, where $\mathfrak{m}_L$ is the maximal ideal in $L$.
Write $\fp$ for the (unique) prime above $p$ in $\Q^{\cyc}$, and for the prime above $p$ in every finite layer of the cyclotomic tower.
Define the plus and minus norm groups as follows
\[ \widehat{E}^+(\Q_{n,\fp}^{\cyc}) :=
\left\{P\in \widehat{E}(\Q_{n,\fp}^{\cyc}) \mid \tr_{n/m+1} (P)\in \widehat{E}(\Q_{m,\fp}^{\cyc}), \text{ for }0\leq m < n\text{ and }m \text{ even }\right\},\]
\[\widehat{E}^-(\Q_{n,\fp}^{\cyc}):=\left\{P\in \widehat{E}(\Q_{n,\fp}^{\cyc}) \mid \tr_{n/m+1} (P)\in \widehat{E}(\Q_{m,\fp}^{\cyc}),\text{ for }0\leq m < n\text{ and }m 
\text{ odd }\right\},\]
where $\tr_{n/m+1}:\widehat{E}(\Q_{n,\fp}^{\cyc})\rightarrow \widehat{E}(\Q_{m+1,\fp}^{\cyc})$ denotes the trace map with respect to the formal group law on $\widehat{E}$. The completion $\Q^{\cyc}_{\fp}$ is the union of completions $\bigcup_{n\geq 1} \Q^{\cyc}_{n,\fp}$, and set $\widehat{E}^{\pm}(\Q_{\fp}^{\cyc}):=\bigcup_{n\geq 1}\widehat{E}^{\pm}(\Q_{n,\fp}^{\cyc})$. 
Define
\[J_p^{\pm}(E/\Q^{\cyc}):=\frac{H^1(\Q_{\fp}^{\cyc},E[p^{\infty}])}{\widehat{E}^{\pm}(\Q_{\fp}^{\cyc})\otimes \Q_p/\Z_p},\] where the inclusion
\[\widehat{E}^{\pm}(\Q_{\fp}^{\cyc})\otimes \Q_p/\Z_p\hookrightarrow H^1(\Q_{\fp}^{\cyc},E[p^{\infty}])\] is induced via the Kummer map.
For $v\in S\setminus \{p\}$, set $J_v^{\pm}(E/\Q^{\cyc})$ to be equal to $J_v(E/\Q^{\cyc})$.
The \emph{plus and minus Selmer groups} are defined as follows
\[\Sel_{p^\infty}^{\pm}(E/\Q^{\cyc}):=\ker\left\{H^1\left(\Q_S/\Q^{\cyc},E[p^{\infty}]\right)\rightarrow \bigoplus_{v\in S} J_v^{\pm}(E/\Q^{\cyc})\right\}.\]
For each choice of sign $\ddag\in \{+,-\}$, set $f_E^{(p),\ddag}(T)$ to denote the characteristic element of $\Sel_{p^\infty}^{\ddag}(E/\Q^{\cyc})^{\vee}$, with $\mu_{p}^{\ddag}(E)$, $\lambda_{p}^{\ddag}(E)$ defined analogously.

Let $E$ be an elliptic curve with good (ordinary or supersingular) reduction at $p\geq 5$.
In order to state results in both ordinary and supersingular case at once, we set for the remainder of the article the following notation,
\[
\Selast := 
\begin{cases}
\Selpm & \textrm{if } E \textrm{ has supersingular reduction at } p, \textrm{ where } \ddag=\pm,\\
\Selp & \textrm{if } E \textrm{ has ordinary reduction at }p.
\end{cases}
\]

\subsection{} Next, we introduce the fine Selmer group.
Let $E$ be an elliptic curve over $\Q$ and $p$ be any odd prime. At each prime $v\in S$, set \[\mathcal{K}_v(E/\Q^{\cyc}) :=\bigoplus_{\eta|v} H^1(\Q^{\cyc}_{\eta}, E[p^{\infty}]).\]
The \emph{$p$-primary fine Selmer group} of $E$ is defined as follows
\[\Sel^0_{p^{\infty}}(E/\Q^{\cyc}) := \ker\left\{ H^1(\Q_S/\Q^{\cyc}, E[p^{\infty}])\longrightarrow \bigoplus_{v\in S} \mathcal{K}_v(E/ \Q^{\cyc}) \right\}.\]
By the result of Kato mentioned in the introduction, the Pontryagin dual of $\Sel^0_{p^{\infty}}(E/\Q^{\cyc})$ is known to be a cotorsion $\Lambda$-module independent of the reduction type at $p$.
Further, it is conjectured that this fine Selmer group is a cotorsion $\Z_p$-module (i.e. the corresponding $\mu$-invariant vanishes), see \cite[Conjecture A]{CS05}.

\subsection{} In what follows, M will be a cofinitely generated cotorsion $\Lambda$-module. Note that $H^i(\Gamma, \rm{M})$ is always zero for $i\geq 2$.

\begin{Lemma}\label{balancedrank}
Let M be a cofinitely generated cotorsion $\Lambda$-module.
Then, 
\[\corank_{\Z_p} \rm{M}^{\Gamma}=\corank_{\Z_p} \rm{M}_{\Gamma}.\]
\end{Lemma}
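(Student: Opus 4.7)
The plan is to apply Pontryagin duality. Starting from the tautological four-term exact sequence
\[
0 \to M^\Gamma \to M \xrightarrow{\gamma-1} M \to M_\Gamma \to 0,
\]
dualizing gives
\[
0 \to (M_\Gamma)^\vee \to M^\vee \xrightarrow{T} M^\vee \to (M^\Gamma)^\vee \to 0,
\]
where $T$ corresponds to $\gamma-1$ under the isomorphism $\Lambda \cong \Z_p\llbracket T\rrbracket$. Consequently, the claim is equivalent to the equality
\[
\rk_{\Z_p} \ker(T \mid M^\vee) = \rk_{\Z_p} \coker(T \mid M^\vee),
\]
and $M^\vee$ is now a finitely generated torsion $\Lambda$-module, which is a much more tractable situation.

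First I would invoke the Structure Theorem recalled above to obtain a pseudo-isomorphism
\[
M^\vee \longrightarrow E := \bigoplus_{i=1}^s \Lambda/(p^{m_i}) \,\oplus\, \bigoplus_{j=1}^t \Lambda/(h_j(T))
\]
with finite kernel and cokernel. Splitting this map into the short exact sequences $0 \to \ker \to M^\vee \to \operatorname{im} \to 0$ and $0 \to \operatorname{im} \to E \to \coker \to 0$, and applying the snake lemma to the multiplication-by-$T$ diagrams on each, one sees that the kernel and cokernel of $T$ acting on $M^\vee$ agree with those on $E$ up to finite correction terms. Since these corrections have $\Z_p$-rank zero, the problem reduces to verifying the equality on each cyclic summand of $E$.

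On a $\mu$-summand $\Lambda/(p^{m_i}) \cong (\Z/p^{m_i})\llbracket T\rrbracket$, multiplication by $T$ is a non-zero-divisor, so $\ker T = 0$, and $\coker T \cong \Z/p^{m_i}$ is finite, giving $\Z_p$-rank zero on both sides. On a $\lambda$-summand $\Lambda/(h_j(T))$, Weierstrass division identifies the module with a free $\Z_p$-module of rank $\deg h_j$, so applying the additivity of $\Z_p$-rank to the exact sequence
\[
0 \to \ker T \to \Lambda/(h_j) \xrightarrow{T} \Lambda/(h_j) \to \coker T \to 0
\]
yields $\rk_{\Z_p} \ker T = \rk_{\Z_p} \coker T$ immediately.

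The main obstacle, such as it is, lies in the snake-lemma reduction from $M^\vee$ to its elementary quotient $E$: one has to check carefully that pseudo-isomorphism of $\Lambda$-modules produces only $\Z_p$-rank-zero discrepancies between the respective $T$-invariants and $T$-coinvariants. This is formal once the relevant diagrams are drawn, since the correction terms are finite groups, but it is the only place in the argument where one has to be genuinely careful rather than purely mechanical.
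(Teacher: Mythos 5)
Your proof is correct, but it takes a genuinely different route from the paper. The paper's argument is essentially a two-line deduction: after identifying $H^1(\Gamma,\mathrm{M})$ with $\mathrm{M}_\Gamma$, it cites Howson's Euler characteristic formula
\[
\corank_\Lambda \mathrm{M} \;=\; \corank_{\Z_p} H^0(\Gamma,\mathrm{M}) \;-\; \corank_{\Z_p} H^1(\Gamma,\mathrm{M}),
\]
and the claim drops out immediately since $\corank_\Lambda \mathrm{M}=0$ for a cotorsion module. You instead dualize, invoke the structure theorem for finitely generated torsion $\Lambda$-modules, pass to the elementary module by a snake-lemma argument (using that a pseudo-isomorphism perturbs $T$-invariants and $T$-coinvariants only by finite groups, hence by $\Z_p$-rank zero), and then verify the rank equality directly on each cyclic summand: finiteness of $\ker T$ and $\coker T$ on $\Lambda/(p^{m_i})$, and additivity of $\Z_p$-rank on $\Lambda/(h_j)$, which is $\Z_p$-free of rank $\deg h_j$ by Weierstrass division. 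This is in effect a self-contained re-derivation of the relevant special case of Howson's formula, which is itself established by exactly this kind of dualize-and-decompose argument; the paper buys brevity by citing the black box, while your version buys transparency at the cost of length. One small imprecision worth flagging: the Pontryagin dual of multiplication by $\gamma-1$ on $\mathrm{M}$ is multiplication by $\gamma^{-1}-1$ on $\mathrm{M}^\vee$, which equals $-\gamma^{-1}(\gamma-1)$, a unit multiple of $T$ rather than $T$ itself. This of course changes none of the kernels, cokernels, or ranks, but ``corresponds to $T$'' should be read as ``up to a unit of $\Lambda$.''
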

\begin{proof}
Note that $H^1(\Gamma, \rm{M})$ may be identified with $\rm{M}_{\Gamma}$ (see \cite[Proposition 1.7.7]{NSW08}).
It follows from \cite[Theorem 1.1]{How02} that 
\[
\corank_{\Lambda} \rm{M}= \corank_{\Z_p}H^0(\Gamma, \rm{M})-\corank_{\Z_p}H^1(\Gamma, \rm{M}).
\]
Since $\rm{M}$ is assumed to be cotorsion over $\Lambda$, the result follows.
\end{proof}

When the cohomology groups $H^0(\Gamma, \rm{M})$ and $H^1(\Gamma, \rm{M})$ are finite, the (classical) \emph{Euler characteristic} $\chi(\Gamma, \rm{M})$ is defined as the alternating product
\[\chi(\Gamma, \rm{M})=\prod_{i\geq 0} \left(\# H^i(\Gamma, \rm{M})\right)^{(-1)^i}.\]
On the other hand, when the cohomology groups $H^i(\Gamma, \rm{M})$ are not finite, there is a generalized version denoted by $\chi_t(\Gamma, \rm{M})$.
Since $H^1(\Gamma, \rm{M})$ is isomorphic to the group of coinvariants $H_0(\Gamma, \rm{M})=\rm{M}_{\Gamma}$, there is a natural map 
\[
\Phi_{\rm{M}}:\rm{M}^{\Gamma}\rightarrow \rm{M}_{\Gamma}
\]
sending $x\in \rm{M}^{\Gamma}$ to the residue class of $x$ in $\rm{M}_{\Gamma}$.
We say that the \emph{truncated Euler characteristic} is defined if the kernel and cokernel of $\Phi_{\rm{M}}$ are finite.
In this case, the $\chi_t(\Gamma, \rm{M})$ is defined to be the following quotient,
\[
\chi_t(\Gamma, \rm{M}):=\frac{\#\op{cok}(\Phi_{\rm{M}})}{\#\ker(\Phi_{\rm{M}})}.
\]
It is easy to check that when $\chi(\Gamma, \rm{M})$ is defined, so is $\chi_t(\Gamma, \rm{M})$. 
In fact,
\[\chi_t(\Gamma, \rm{M})=\chi(\Gamma, \rm{M}).\]
Express the characteristic element $f_{\rm{M}}^{(p)}(T)$ as a polynomial, 
\[
f_{\rm{M}}^{(p)}(T)=c_0+c_1T+\dots +c_d T^d.
\]
Let $r_{\rm{M}}$ denote the order of vanishing of $f_{\rm{M}}^{(p)}(T)$ at $T=0$.
For $a,b\in \Q_p$, we write $a\sim b$ if there is a unit $u\in \Z_p^{\times}$ such that $a=bu$.
\begin{Lemma}[S. Zerbes]
\label{lemmazerbes}
Let M be a cofinitely generated cotorsion $\Lambda$-module. Assume that the kernel and cokernel of $\Phi_{\rm{M}}$ are finite.
Then,
\begin{enumerate}
\item ${r_{\rm{M}}}=\corank_{\Z_p}(\rm{M}^{\Gamma})=\corank_{\Z_p}(\rm{M}_{\Gamma})$.
\item $c_{r_{\rm{M}}}\neq 0$.
\item $c_{r_{\rm{M}}}\sim \chi_t(\Gamma, \rm{M})$. 
\end{enumerate}
\end{Lemma}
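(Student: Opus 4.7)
The plan is to use the Structure Theorem to reduce to the elementary $\Lambda$-module and handle each summand by direct computation. Write $\rm{M}^\vee$ as pseudo-isomorphic to $E = \bigoplus_i \Lambda/(p^{m_i}) \oplus \bigoplus_j \Lambda/(h_j(T))$. Pontryagin duality identifies $\rm{M}^\Gamma \cong (\rm{M}^\vee/T\rm{M}^\vee)^\vee$ and $\rm{M}_\Gamma \cong (\rm{M}^\vee[T])^\vee$, so $\Phi_{\rm{M}}$ corresponds dually to the natural map $\psi: \rm{M}^\vee[T] \to \rm{M}^\vee/T\rm{M}^\vee$. Both the coranks in question and $\chi_t$ are invariant under pseudo-isomorphism (finite $\Lambda$-modules have rank $0$, and any finite $\Gamma$-module $F$ satisfies $|F^\Gamma|=|F_\Gamma|$, forcing $\chi_t(F)=1$), so the computation may be carried out on $E$.

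For claim (1), Lemma \ref{balancedrank} gives the common corank $r = \rk_{\Z_p}(E/TE)$, and the elementary-module calculation yields $r = \#\{j : T \mid h_j\}$. Meanwhile, $r_{\rm{M}} = \sum_j \ord_T h_j$. The decisive use of the finiteness hypothesis on $\Phi_{\rm{M}}$ is to exclude the possibility $T^2 \mid h_j$ for any $j$: if $T^2 \mid h_j$, then on the summand $N_j := \Lambda/(h_j)$, the submodule $N_j[T]$ is generated by $h_j/T$, whose image in $N_j/TN_j$ vanishes; hence $\psi$ is the zero map on this summand, which forces $\ker \Phi_{\rm{M}}$ to contain a copy of $\Q_p/\Z_p$, contradicting finiteness. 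Therefore $\ord_T h_j \in \{0,1\}$ for every $j$, and so $r = r_{\rm{M}}$. Claim (2) is immediate from the definition of $r_{\rm{M}}$ as the order of vanishing.

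For claim (3), I would exploit multiplicativity of $\chi_t$ across the direct-sum decomposition of $E^\vee$, then compute each factor directly. For $(\Lambda/p^m)^\vee$, one obtains $\chi_t \sim p^m$. For $(\Lambda/h)^\vee$ with $h(0) \neq 0$, $\Phi$ is multiplication by $h(0)$ on a finite cyclic group, giving $\chi_t \sim h(0)$. For $(\Lambda/Th)^\vee$ with $T \nmid h$, the generator of $(\Lambda/Th)[T]$ is $h$, which maps to $h(0) \in p\Z_p \setminus \{0\}$ in $(\Lambda/Th)/T$, so dually $\Phi$ is multiplication by $h(0)$ on $\Q_p/\Z_p$, again giving $\chi_t \sim h(0)$. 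Multiplying contributions yields $\chi_t(\Gamma, \rm{M}) \sim p^{\sum_i m_i} \prod_j \tilde h_j(0)$, where $\tilde h_j := h_j/T^{\ord_T h_j}$, and this product is exactly $c_{r_{\rm{M}}}$.

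The main obstacle is the careful computation of $\Phi$ on the mixed summand $(\Lambda/Th)^\vee$: one must correctly identify the generator of $N[T]$ as $h$ (rather than $1$ or $T$), track its image in $N/TN$, and verify that after dualization the map $\Phi$ becomes multiplication by $h(0)$ on $\Q_p/\Z_p$ in the appropriate $\Z_p$-bases. The remainder --- verifying multiplicativity of $\chi_t$ across direct sums and its invariance under pseudo-isomorphism --- is a routine snake-lemma exercise.
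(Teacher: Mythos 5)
The paper does not prove this lemma at all --- it simply cites \cite[Lemma~2.11]{zerbes09} --- so your proposal is not a variant of the paper's argument but a self-contained replacement for it. Your strategy (dualize, reduce to the elementary module $E$ via the Structure Theorem, compute $\psi \colon E[T] \to E/TE$ summand by summand, and use the finiteness of $\ker\Phi_{\rm M}$ and $\op{coker}\Phi_{\rm M}$ to rule out $T^2 \mid h_j$) is sound and does recover the statement. Two things deserve attention, though.

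First, one of your three local computations is misdescribed. For a summand $N = \Lambda/(h)$ with $h(0)\neq 0$, the pieces are $N[T] = 0$ (since $T$ and $h$ are coprime in $\Lambda$) and $N/TN \cong \Z_p/(h(0))$; so on the dual side $\Phi$ is the zero map from the finite cyclic group $(\Z_p/(h(0)))^{\vee}$ to the trivial group, not ``multiplication by $h(0)$ on a finite cyclic group.'' The kernel has order $p^{v_p(h(0))}$ and the cokernel is trivial, so the conclusion $\chi_t \sim h(0)$ does come out right, but the intermediate description is wrong and should be corrected. (Similarly, in your $T^2 \mid h_j$ exclusion, what the direct-summand calculation literally produces is $\ker\psi \cong \Z_p \cong \op{coker}\psi$ on that summand, so after dualizing \emph{both} $\op{coker}\Phi_{\rm M}$ and $\ker\Phi_{\rm M}$ pick up a $\Q_p/\Z_p$; your statement that it is $\ker\Phi_{\rm M}$ is correct but not the whole picture.)

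Second, you lean on two facts as ``routine'': multiplicativity of $\chi_t$ over direct sums, and invariance of the finiteness of $\ker\Phi$, $\op{coker}\Phi$ and of the value of $\chi_t$ under pseudo-isomorphism. The first is genuinely immediate. The second is true but not merely a one-line snake-lemma application --- one must compare $\ker\psi$ and $\op{coker}\psi$ for $M^{\vee}$ and for $E$ through a chain of two short exact sequences with finite outer terms and keep track of the resulting finite error terms; if you are writing this out as a replacement for the citation, that step should be made explicit, since the hypothesis ``$\ker\Phi_{\rm M}$ and $\op{coker}\Phi_{\rm M}$ finite'' is about $M$ and all your computations are on $E$. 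Finally, a cosmetic note unrelated to your argument: your calculations implicitly use $\chi_t = \#\ker\Phi_{\rm M}/\#\op{coker}\Phi_{\rm M}$, which is the convention under which $\chi_t = \chi$ when both are defined; the displayed definition in the paper has the fraction inverted.
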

\begin{proof}
See \cite[Lemma 2.11]{zerbes09}.
\end{proof}

In particular, the classical Euler characteristic $\chi(\Gamma, \rm{M})$ is defined if and only if $r_{\rm{M}}=0$. 
When this happens, the constant coefficient $c_0\sim \chi(\Gamma, \rm{M})$.

We specialize the discussion on Euler characteristics to Selmer groups of elliptic curves.
Let $\ddag$ be a choice of sign and recall the definition of $\Selast$.
When $E$ has good ordinary reduction at $p$, the choice of $\ddag$ is irrelevant.
Denote by $\chi_t^{\ddag}(\Gamma, E[p^{\infty}])$ (resp. $\chi^{\ddag}(\Gamma, E[p^{\infty}])$) the truncated (resp. classical) Euler characteristic of the Selmer group $\Selast$. The invariants $\mu_p^{\ddag}(E)$ and $\lambda_p^{\ddag}(E)$ simply refer to $\mup$ and $\lambdap$ when $E$ has good ordinary reduction at $p$.
When $E$ has good ordinary reduction at $p$, we shall drop the sign $\ddag$ from the notation.
It follows from Lemma $\ref{lemmazerbes}$ that the truncated Euler characteristic is always an integer.

\section{Results for a fixed elliptic curve and varying prime}
In this section, we study the variation of the classical and the truncated Euler characteristic as $p$ varies.
Fix a pair $(E,p)$ such that
\begin{enumerate}
\item $p$ is odd.
 \item $E$ is defined over $\Q$ and has good reduction at $p$.
\end{enumerate}
We record some lemmas which are required throughout this section.
Recall that when $E$ has good ordinary reduction, $\Selast$ simply refers to $\Selp$.
\begin{Th}\label{control}
Let $\ddag\in \{+,-\}$ be a choice of sign.
Let $E_{/\Q}$ be an elliptic curve with good reduction at $p>2$.
Then, there is a natural map
\[
\Sel_{p^\infty}(E/\Q)\rightarrow \Sel_{p^\infty}^{\ddag}(E/\Q^{\cyc})^{\Gamma}
\]
with finite kernel and cokernel.
\end{Th}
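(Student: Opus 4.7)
The plan is to establish this by a Mazur-style control theorem, using the fundamental diagram relating the Selmer groups at level $\Q$ and at level $\Q^{\cyc}$, together with the snake lemma. Explicitly, I would work with the commutative diagram with exact rows
\begin{equation*}
\begin{CD}
0 @>>> \Sel_{p^\infty}(E/\Q) @>>> H^1(\Q_S/\Q, E[p^\infty]) @>>> \bigoplus_{v \in S} J_v(E/\Q) \\
@. @VV\alpha V @VV\beta V @VV\gamma V \\
0 @>>> \Sel_{p^\infty}^{\ddag}(E/\Q^{\cyc})^\Gamma @>>> H^1(\Q_S/\Q^{\cyc}, E[p^\infty])^\Gamma @>>> \left(\bigoplus_{v \in S} J_v^{\ddag}(E/\Q^{\cyc})\right)^\Gamma,
\end{CD}
\end{equation*}
in which the right-hand vertical map decomposes as $\gamma = \bigoplus_{v \in S} \gamma_v$ over places in $S$. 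Applying the snake lemma, it suffices to show that $\ker\beta$, $\coker\beta$, and each $\ker\gamma_v$ are finite.

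For the global vertical map $\beta$, inflation--restriction identifies $\ker\beta$ with $H^1(\Gamma, E[p^\infty](\Q^{\cyc}))$ and $\coker\beta$ with a subgroup of $H^2(\Gamma, E[p^\infty](\Q^{\cyc}))$. Since $\Gamma \cong \Z_p$ has cohomological dimension one, the $H^2$-term vanishes, so $\coker\beta = 0$; and since $E[p^\infty](\Q^{\cyc})$ is finite (a standard result going back to Ribet/Imai on torsion in cyclotomic towers), $\ker\beta$ is finite.

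The local analysis of $\ker\gamma_v$ splits into cases. For primes $v \in S$ with $v \neq p$, the completion $\Q^{\cyc}_w/\Q_v$ is an unramified $\Z_p$-extension, so an inflation--restriction computation on $H^1(\Q_v, E)[p^\infty]$ places $\ker\gamma_v$ inside $H^1(\Gamma_v, E(\Q^{\cyc}_w)[p^\infty])$, and a direct argument using the action of Frobenius on $E[p^\infty]$ shows this is finite. For $v = p$ in the good ordinary case, one exploits the filtration $0 \to \widehat{E}[p^\infty] \to E[p^\infty] \to \widetilde{E}[p^\infty] \to 0$ coming from good ordinary reduction, together with the finiteness of $\widehat{E}(\Q_p^{\cyc})[p^\infty]$, to conclude that $\ker\gamma_p$ is finite; this is the classical Coates--Greenberg computation. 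In the supersingular case, $J_p^{\pm}$ is defined using the plus/minus norm subgroups $\widehat{E}^{\pm}(\Q^{\cyc}_{p,n})$, and Kobayashi's structural results on these subgroups in \cite{Kob03} provide a substitute for the ordinary filtration argument, again yielding finiteness of $\ker\gamma_p$.

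The main obstacle is the case $v = p$ under supersingular reduction: the naive local Selmer condition does not admit a control theorem (which is precisely why the usual $p$-primary Selmer group fails to be $\Lambda$-cotorsion in that setting), so the argument genuinely depends on the norm-subgroup substitute $J_p^{\pm}$ and on Kobayashi's computation of its $\Gamma$-cohomology. Once these local finiteness statements are assembled, the snake lemma produces finite $\ker\alpha$ and $\coker\alpha$, completing the proof.
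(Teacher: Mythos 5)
Your proof is correct and follows the same strategy as the references the paper cites: Mazur's control theorem (as in Greenberg's notes, \cite[Theorem 1.2]{Gre99}) in the ordinary case, and Kim's Lemma 3.9 (or Ray's Proposition 5.1) in the supersingular case, all of which rest on exactly the commutative-diagram-plus-snake-lemma argument you lay out, with inflation--restriction and $\operatorname{cd}_p(\Gamma)=1$ controlling $\ker\beta$ and $\coker\beta$, Imai's theorem giving finiteness of $E[p^\infty](\Q^{\cyc})$, and the Coates--Greenberg (resp.\ Kobayashi plus/minus) local analysis at $p$. The paper simply delegates the proof to these citations rather than reproducing the diagram chase, so your argument is the same in substance.
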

\begin{proof}
When $E$ has good ordinary reduction at $p$, this follows from Mazur's control theorem, see \cite{mazur72} or \cite[Theorem 1.2]{Gre99}.
In the good supersingular reduction case, the result follows from the proof of \cite[Lemma 3.9]{Kim13} (see also \cite[Proposition 5.1]{ray2}).
\end{proof}
The following lemma gives a criterion for when the classical Euler characteristic $\chi^{\ddag}(\Gamma, E[p^{\infty}])$ is well-defined.
\begin{Lemma}\label{lemma31}
Let $\ddag\in \{+,-\}$ be a choice of sign.
Let $E_{/\Q}$ be an elliptic curve with good reduction at $p>2$.
The following are equivalent.
\begin{enumerate}
 \item\label{31c1} The classical Euler characteristic $\chi^{\ddag}(\Gamma, E[p^{\infty}])$ is well-defined.
 \item\label{31c2} $\Selast^{\Gamma}$ is finite.
 \item\label{31c3} The Selmer group $\Sel_{p^{\infty}}(E/\Q)$ is finite.
 \item\label{31c4} The Mordell-Weil group $E(\Q)$ is finite, i.e. the Mordell-Weil rank is 0.
\end{enumerate} 
\end{Lemma}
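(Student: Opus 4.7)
The plan is to establish this four-way equivalence via a cycle $(1) \Rightarrow (2) \Rightarrow (3) \Rightarrow (4) \Rightarrow (1)$: the first pair will come from Lemma \ref{balancedrank} combined with Zerbes's lemma (Lemma \ref{lemmazerbes}), the second from the control theorem (Theorem \ref{control}), and the third from the defining short exact sequence \eqref{sesSelmer}.

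For $(1) \Leftrightarrow (2)$, I would recall that by definition $\chi^{\ddag}(\Gamma, E[p^{\infty}])$ is defined precisely when both $H^{0}(\Gamma, \Selast) = (\Selast)^{\Gamma}$ and $H^{1}(\Gamma, \Selast) \cong (\Selast)_{\Gamma}$ are finite. Since Kato's theorem guarantees $\Selast$ is a cofinitely generated cotorsion $\Lambda$-module, Lemma \ref{balancedrank} forces $\corank_{\Z_{p}}(\Selast)^{\Gamma} = \corank_{\Z_{p}}(\Selast)_{\Gamma}$; since each is cofinitely generated over $\Z_{p}$, one is finite iff the other is iff the common corank vanishes. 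Equivalently, Lemma \ref{lemmazerbes} identifies this with the condition $r_{M} = 0$ for $M = \Selast$. The implication $(2) \Leftrightarrow (3)$ is then immediate from Theorem \ref{control}, since the natural restriction $\Sel_{p^{\infty}}(E/\Q) \to (\Selast)^{\Gamma}$ has finite kernel and cokernel.

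Finally, for $(3) \Leftrightarrow (4)$, I would use \eqref{sesSelmer} to extract the corank formula $\corank_{\Z_{p}} \Sel_{p^{\infty}}(E/\Q) = r_{E} + \corank_{\Z_{p}} \Sha(E/\Q)[p^{\infty}]$. If (3) holds, the left side vanishes, forcing $r_{E} = 0$ (and incidentally $\Sha(E/\Q)[p^{\infty}]$ finite); by Mordell--Weil, $E(\Q)$ is then finite, yielding (4). Conversely, rank vanishing gives $E(\Q) \otimes \Q_{p}/\Z_{p} = 0$ and an identification $\Sel_{p^{\infty}}(E/\Q) \cong \Sha(E/\Q)[p^{\infty}]$; coupled with the (standard, here implicit) finiteness of the $p$-primary Tate--Shafarevich group, this recovers (3) and closes the cycle. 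The delicate point will be precisely this final implication: algebraic rank zero does not a priori force $\Sha(E/\Q)[p^{\infty}]$ to be finite, so the stated equivalence is naturally read under the customary Iwasawa-theoretic convention that $\Sha(E/\Q)[p^{\infty}]$ is finite.
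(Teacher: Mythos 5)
Your handling of $(1)\Leftrightarrow(2)$ and $(2)\Leftrightarrow(3)$ matches the paper's proof: the first equivalence comes from Lemma~\ref{balancedrank} (finiteness of $(\Selast)^{\Gamma}$ is equivalent to finiteness of $(\Selast)_{\Gamma}\cong H^{1}(\Gamma,\Selast)$, and both finite is exactly the definition of the classical Euler characteristic being defined), and the second is the control theorem (Theorem~\ref{control}). The citation of Zerbes's lemma is harmless extra decoration but not needed.

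The gap is in your treatment of $(4)\Rightarrow(3)$. You write that ``algebraic rank zero does not a priori force $\Sha(E/\Q)[p^{\infty}]$ to be finite, so the stated equivalence is naturally read under the customary Iwasawa-theoretic convention that $\Sha(E/\Q)[p^{\infty}]$ is finite.'' This is not a hidden hypothesis: it is a theorem. The paper invokes Kolyvagin's result that when $E(\Q)$ is finite, $\Sha(E/\Q)$ is finite (Kolyvagin, 1989), so $(4)\Rightarrow(3)$ holds unconditionally via the exact sequence \eqref{sesSelmer}. By overlooking this, you have cast doubt on an implication that the paper proves cleanly, and you have effectively weakened the lemma by treating one of its four statements as conditional. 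The fix is simply to replace ``implicit convention'' with the actual theorem: rank zero gives $E(\Q)\otimes\Q_p/\Z_p=0$, Kolyvagin gives $\#\Sha(E/\Q)[p^{\infty}]<\infty$, and then \eqref{sesSelmer} identifies $\Sel_{p^{\infty}}(E/\Q)$ with a finite group.
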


\begin{proof}
Recall that the Selmer group $\Selast$ is a cotorsion $\Lambda$-module.
By Lemma $\ref{balancedrank}$, $\Selast^{\Gamma}$ is finite if and only if $\Selast_{\Gamma}$ is finite.
This shows that $\eqref{31c1}$ and $\eqref{31c2}$ are equivalent.

Theorem $\ref{control}$ asserts that there is a natural map
\[
\Sel_{p^\infty}(E/\Q)\rightarrow \Selast^{\Gamma},
\]
with finite kernel and cokernel.
Hence, the conditions $\eqref{31c2}$ and $\eqref{31c3}$ are equivalent.
By the work of V. Kolyvagin, $\Sha(E/\Q)$ is known to be finite when $E(\Q)$ is finite (see \cite{Kol89}).
Thus, it follows from \eqref{sesSelmer} that conditions $\eqref{31c3}$ and $\eqref{31c4}$ are equivalent.
\end{proof}

\begin{Lemma}\label{lemma32}
Let $\ddag\in \{+,-\}$ be a choice of sign.
Let $E_{/\Q}$ be an elliptic curve with good reduction at $p>2$.
Assume that $\chi_t^{\ddag}(\Gamma, E[p^{\infty}])$ is well-defined and $\Sha(E/\Q)[p^{\infty}]$ is finite.
Let $r_E$ denote the Mordell-Weil rank of $E(\Q)$.
Then, $r_E$ is equal to the order of vanishing of $f_{E}^{(p),\ddag}(T)$ at $T=0$.
In particular, we have that $\lambda_p^{\ddag}(E)\geq r_E$. 
\end{Lemma}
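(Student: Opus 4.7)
The plan is to chain together three pieces that are already in place: Zerbes's lemma identifying the order of vanishing of the characteristic element with the $\Z_p$-corank of $\Gamma$-invariants, the control theorem comparing $\Sel_{p^\infty}(E/\Q)$ with $(\Selast)^\Gamma$, and the short exact sequence $\eqref{sesSelmer}$ that relates the $\Z_p$-corank of $\Sel_{p^\infty}(E/\Q)$ to the Mordell-Weil rank. Once the order of vanishing at $T=0$ is identified with $r_E$, the inequality $\lambda_p^\ddag(E)\geq r_E$ is immediate from the factorization of $f_E^{(p),\ddag}(T)$.

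In detail, I would first apply Lemma $\ref{lemmazerbes}(1)$ to $\mathrm{M}=\Selast$, which is cotorsion over $\Lambda$ (Kato in the ordinary case, Kobayashi in the supersingular case) and whose truncated Euler characteristic is well-defined by assumption. This gives
\[
r_{\Selast^{\vee}}=\corank_{\Z_p}\bigl((\Selast)^{\Gamma}\bigr),
\]
where $r_{\Selast^\vee}$ is precisely the order of vanishing of $f_E^{(p),\ddag}(T)$ at $T=0$.

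Next, I would invoke the control theorem (Theorem $\ref{control}$) to produce a map $\Sel_{p^{\infty}}(E/\Q)\to(\Selast)^{\Gamma}$ with finite kernel and cokernel; since cokernel and kernel of such a map contribute nothing to the $\Z_p$-corank, I obtain
\[
\corank_{\Z_p}\bigl((\Selast)^{\Gamma}\bigr)=\corank_{\Z_p}\Sel_{p^{\infty}}(E/\Q).
\]
Then I use the short exact sequence $\eqref{sesSelmer}$, together with the hypothesis that $\Sha(E/\Q)[p^{\infty}]$ is finite (so contributes nothing to the corank), to conclude that the right-hand side equals $\corank_{\Z_p}\bigl(E(\Q)\otimes\Q_p/\Z_p\bigr)=r_E$. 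Chaining the equalities gives that the order of vanishing of $f_E^{(p),\ddag}(T)$ at $T=0$ is exactly $r_E$.

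Finally, writing $f_E^{(p),\ddag}(T)=p^{\mu_p^\ddag(E)}\prod_j h_j(T)$ with each $h_j$ distinguished of total degree $\lambda_p^\ddag(E)$, the order of vanishing at $T=0$ is bounded above by $\sum_j\deg h_j=\lambda_p^\ddag(E)$, so $\lambda_p^\ddag(E)\geq r_E$. There is no real obstacle here; the only mildly delicate point is the supersingular case of the control theorem, but that is already quoted as Theorem $\ref{control}$, so the argument is essentially a bookkeeping chain of the earlier results.
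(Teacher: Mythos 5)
Your proposal is correct and takes essentially the same route as the paper: both arguments chain Lemma~\ref{lemmazerbes}(1), Theorem~\ref{control}, and the finiteness of $\Sha(E/\Q)[p^\infty]$ via \eqref{sesSelmer} to identify the order of vanishing of $f_E^{(p),\ddag}(T)$ at $T=0$ with $\corank_{\Z_p}\Sel_{p^\infty}(E/\Q)=r_E$. The only difference is cosmetic (you present a forward chain of equalities plus an explicit degree-counting argument for the final inequality, whereas the paper phrases it as a reduction and leaves the last step implicit).
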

\begin{proof}
Since it is assumed that $\Sha(E/\Q)[p^{\infty}]$ is finite, it follows from $\eqref{sesSelmer}$ that
\[
r_E=\corank_{\Z_p} \Sel_{p^\infty}(E/\Q).
\]
By Lemma $\ref{lemmazerbes}$, it suffices to show that \[r_E=\corank_{\Z_p}\Selast^{\Gamma}.\] Therefore the result follows from Theorem $\ref{control}$.
\end{proof}

\begin{Lemma}
\label{TECmulambda}
Let M be a cofinitely generated and cotorsion $\Lambda$-module such that $\phi_{\rm{M}}$ has finite kernel and cokernel. 
Let $r_{\rm{M}}$ be the order of vanishing of $f_{\rm{M}}^{(p)}(T)$ at $T=0$.
Then, the following are equivalent.
\begin{enumerate}
\item\label{TECmulambdac1} $\chi_t(\Gamma, \rm{M})=1$,
\item\label{TECmulambdac2} $\mu(\rm{M})=0$ and $\lambda(\rm{M})=r_{\rm{M}}$.
\end{enumerate}
\end{Lemma}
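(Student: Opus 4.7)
The plan is to read off $\chi_t(\Gamma,\mathrm{M})$ directly from the shape of the characteristic element $f_{\mathrm{M}}^{(p)}(T)$, and match the resulting divisibility condition with the prescription $\mu=0$, $\lambda=r_{\mathrm{M}}$. The key input is part (3) of Lemma \ref{lemmazerbes}: under the finiteness hypothesis on $\Phi_{\mathrm{M}}$, we have $c_{r_{\mathrm{M}}}\sim \chi_t(\Gamma,\mathrm{M})$. In particular, $\chi_t(\Gamma,\mathrm{M})=1$ if and only if $c_{r_{\mathrm{M}}}$ is a unit in $\Z_p^{\times}$. So the whole proof boils down to understanding when the first nonzero coefficient of $f_{\mathrm{M}}^{(p)}(T)$ lies in $\Z_p^{\times}$.

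To do this, first factor $f_{\mathrm{M}}^{(p)}(T) = p^{\mu}\, g(T)$, where $\mu=\mu(\mathrm{M})$ and $g(T)=\prod_{j=1}^{t} h_j(T)$ is a product of distinguished polynomials, hence itself a distinguished polynomial of degree $\lambda=\lambda(\mathrm{M})$. Writing
\[
g(T)=T^{\lambda}+b_{\lambda-1}T^{\lambda-1}+\cdots+b_0, \qquad p\mid b_i \text{ for } i<\lambda,
\]
the coefficients of $f_{\mathrm{M}}^{(p)}(T)=c_0+c_1T+\cdots+c_dT^d$ are $c_i=p^{\mu}b_i$ for $i<\lambda$, $c_{\lambda}=p^{\mu}$, and $c_i=0$ for $i>\lambda$. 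By Lemma \ref{lemmazerbes}(1)(2), $r_{\mathrm{M}}$ is the smallest index $i$ with $c_i\ne 0$, so $r_{\mathrm{M}}\leq \lambda$.

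Now I would split into cases. If $\mu>0$, then every nonzero $c_i$ is divisible by $p$, so $c_{r_{\mathrm{M}}}$ is not a unit; hence $\chi_t(\Gamma,\mathrm{M})\ne 1$. If $\mu=0$ but $r_{\mathrm{M}}<\lambda$, then $c_{r_{\mathrm{M}}}=b_{r_{\mathrm{M}}}$ is divisible by $p$ since $g$ is distinguished, so again $c_{r_{\mathrm{M}}}$ is not a unit. The only way to get $c_{r_{\mathrm{M}}}\in\Z_p^{\times}$ is $\mu=0$ and $r_{\mathrm{M}}=\lambda$, in which case $c_{r_{\mathrm{M}}}=c_{\lambda}=1$, giving $\chi_t(\Gamma,\mathrm{M})=1$. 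This establishes the equivalence of \eqref{TECmulambdac1} and \eqref{TECmulambdac2}.

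There is no real obstacle here; the argument is essentially a careful bookkeeping of coefficients, and the only thing one must be a bit attentive to is the degenerate possibilities $\lambda=0$ (in which case $g(T)=1$ and $r_{\mathrm{M}}=0=\lambda$ automatically) and $\mu=\lambda=0$ (in which case $f_{\mathrm{M}}^{(p)}(T)$ is a unit, $\mathrm{M}$ is finite, and $\chi_t=\chi=1$ trivially). Both are handled uniformly by the case analysis above.
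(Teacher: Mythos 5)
Your proof is correct and takes essentially the same approach as the paper: both arguments hinge on Lemma \ref{lemmazerbes}(3), which identifies $\chi_t(\Gamma,\mathrm{M})$ up to a unit with the first nonzero coefficient $c_{r_{\mathrm{M}}}$ of $f_{\mathrm{M}}^{(p)}(T)$, and then reduce the lemma to elementary bookkeeping with the Weierstrass form of the characteristic element. The only cosmetic difference is that the paper factors $f_{\mathrm{M}}^{(p)}(T)=T^{r_{\mathrm{M}}}g_{\mathrm{M}}^{(p)}(T)$ and argues about when the unit $g_{\mathrm{M}}^{(p)}$ can be a distinguished polynomial, while you factor $f_{\mathrm{M}}^{(p)}(T)=p^{\mu}g(T)$ and inspect coefficients directly; the two case analyses are equivalent.
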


\begin{proof}
Suppose that $\chi_t(\Gamma, \rm{M})=1$.
Write $f_{\rm{M}}^{(p)}(T)=T^{r_{\rm{M}}} g_{\rm{M}}^{(p)}(T)$ where $g_{\rm{M}}^{(p)}(T)\in \Lambda$ and $g_{\rm{M}}(0)\neq 0$.
By Lemma $\ref{lemmazerbes}$, \[\abs{g_{\rm{M}}^{(p)}(0)}_p^{-1}=\chi_t(\Gamma, \rm{M})=1.\]
In particular, $f_{\rm{M}}^{(p)}(T)$ and $g_{\rm{M}}^{(p)}(T)$ are distinguished polynomials.
Since $g_{\rm{M}}^{(p)}(0)$ is a unit, it follows that $g_{\rm{M}}^{(p)}(T)$ is a unit.
Since $g_{\rm{M}}^{(p)}(T)$ is a distinguished polynomial, it follows that
\[g_{\rm{M}}^{(p)}(T)=1\text{ and }f_{\rm{M}}^{(p)}(T)=T^{r_{\rm{M}}}.
\] 
Therefore, $\mu(\rm{M})=0$ and $\lambda(\textrm{M})=\deg f_M^{(p)}(T)=r_M$. 

Conversely, suppose that $\mu(\rm{M})=0$ and $\lambda(\textrm{M})=r_{\rm{M}}$.
Since $\mu(\rm{M})=0$, it follows that $f_{\rm{M}}^{(p)}(T)$ and $g_{\rm{M}}^{(p)}(T)$ are distinguished polynomials.
The degree of $f_{\rm{M}}^{(p)}(T)$ is $\lambda_{\rm{M}}=r_{\rm{M}}$. 
It follows that $g_{\rm{M}}^{(p)}(T)$ is a constant polynomial and hence, $g_{\rm{M}}^{(p)}(T)=1$.
By Lemma $\ref{lemmazerbes}$, \[\chi_t(\Gamma, \textrm{M})=\abs{ g_{\textrm{M}}(0)}_p^{-1}=1.\]
\end{proof}

\begin{Proposition}\label{prop34}
Let $\ddag\in \{+,-\}$ be a choice of sign.
Let $E_{/\Q}$ be an elliptic curve of Mordell-Weil rank $r_E$ with good reduction at $p>2$.
Suppose that these additional conditions hold.
\begin{enumerate}[(i)]
 \item\label{34c2} The truncated Euler characteristic $\chi_t^{\ddag}(\Gamma, E[p^{\infty}])$ is defined.
 \item\label{34c3} $\Sha(E/\Q)[p^{\infty}]$ is finite.
\end{enumerate}
Then the following statements are equivalent
\begin{enumerate}
 \item $\chi_t^{\ddag}(\Gamma, E[p^{\infty}])=1$,
 \item $\mu_p^{\ddag}(E)=0$ and $\lambda_p^{\ddag}(E)=r_E$.
\end{enumerate}
\end{Proposition}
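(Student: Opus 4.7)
The plan is to reduce the proposition to the module-theoretic statement already established in Lemma \ref{TECmulambda}, applied to the specific module $\mathrm{M} = \Selast^{\vee}$, and then identify the abstract invariant $r_{\mathrm{M}}$ with the Mordell--Weil rank $r_E$ using Lemma \ref{lemma32}.

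First I would verify that Lemma \ref{TECmulambda} is applicable. The module $\mathrm{M}=\Selast^{\vee}$ is a finitely generated torsion $\Lambda$-module (this is the content of Kato's theorem in the ordinary case and Kobayashi's theorem in the supersingular case, both recalled in the introduction). Hypothesis \eqref{34c2} is precisely the statement that the natural map $\Phi_{\mathrm{M}}$ has finite kernel and cokernel, which is what Lemma \ref{TECmulambda} requires. So Lemma \ref{TECmulambda} yields the equivalence
\[
\chi_t^{\ddag}(\Gamma, E[p^{\infty}])=1 \iff \mu_p^{\ddag}(E)=0 \text{ and } \lambda_p^{\ddag}(E)=r_{\mathrm{M}},
\]
where $r_{\mathrm{M}}$ is the order of vanishing of $f_E^{(p),\ddag}(T)$ at $T=0$.

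Next I would invoke Lemma \ref{lemma32}, whose hypotheses are exactly \eqref{34c2} and \eqref{34c3}, to conclude that $r_{\mathrm{M}} = r_E$. Substituting this into the equivalence above gives the desired statement.

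There is no real obstacle: the proposition is essentially a specialization of the purely module-theoretic Lemma \ref{TECmulambda} to the arithmetic setting, with the only arithmetic input being Lemma \ref{lemma32} (which in turn rests on Mazur's/Kim's control theorem and Kolyvagin's finiteness result, both already invoked). The proof should be a few lines long, and the main thing to be careful about is to cite the correct hypotheses ensuring the module-theoretic machinery applies and that $r_{\mathrm{M}}$ really equals $r_E$ and not some larger integer.
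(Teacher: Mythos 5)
Your proof is correct and is essentially identical to the paper's: both invoke Lemma \ref{lemma32} to identify the order of vanishing $r_{\mathrm{M}}$ of $f_E^{(p),\ddag}(T)$ at $T=0$ with $r_E$, and then apply the module-theoretic equivalence of Lemma \ref{TECmulambda}. No gaps or deviations.
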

\begin{proof}
Lemma $\ref{lemma32}$ asserts that $r_E$ is equal to the order of vanishing of $f_{E}^{(p),\ddag}(T)$ at $T=0$.
The assertion follows from Lemma $\ref{TECmulambda}$.
\end{proof}

\subsection{Elliptic curves over $\Q$ with rank zero}
In this subsection, we study the variation of the classical Euler characteristic as $p$ varies over primes of good reduction.
\begin{Corollary}\label{corollary35}
Let $E$ be an elliptic curve over $\Q$ with good reduction at $p>2$ for which the Mordell-Weil rank of $E$ is zero. 
The following are equivalent.
\begin{enumerate}
 \item\label{35p1} $\chi^{\ddag}(\Gamma, E[p^{\infty}])=1$,
 \item\label{35p2} $\mu_p^{\ddag}(E)=0$ and $\lambda_p^{\ddag}(E)=0$.
 \item\label{35p3} $\Selast$ is finite.
 \item\label{35p4} $\Selast=0$.
\end{enumerate}
\end{Corollary}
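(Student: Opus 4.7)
The plan is to establish the implications $(1)\Leftrightarrow (2)$, $(2)\Leftrightarrow (3)$, and $(3)\Leftrightarrow (4)$ in turn. Since $E(\Q)$ has rank zero it is finite, so Kolyvagin's theorem \cite{Kol89} forces $\Sha(E/\Q)[p^{\infty}]$ to be finite, and by Lemma~\ref{lemma31} the classical Euler characteristic $\chi^{\ddag}(\Gamma, E[p^{\infty}])$ is well-defined and coincides with $\chi_t^{\ddag}(\Gamma, E[p^{\infty}])$. The two hypotheses of Proposition~\ref{prop34} are therefore satisfied with $r_E = 0$, and the equivalence $(1)\Leftrightarrow (2)$ is immediate from that proposition.

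For $(2)\Leftrightarrow (3)$ I would argue structurally. If $\mu_p^{\ddag}(E) = 0$ and $\lambda_p^{\ddag}(E) = 0$, then the characteristic element $f_E^{(p),\ddag}(T)$ is a unit in $\Lambda$, so the structure theorem exhibits $\Selast^{\vee}$ as pseudo-isomorphic to the zero module; because $\Lambda \cong \Z_p\llbracket T\rrbracket$ is a two-dimensional regular local ring, pseudo-null $\Lambda$-modules are exactly the finite ones, and Pontryagin duality transfers this to $\Selast$. Conversely, if $\Selast$ is finite then so is $\Selast^{\vee}$, and the characteristic ideal of a finite $\Lambda$-module is all of $\Lambda$, forcing $\mu_p^{\ddag}(E) = \lambda_p^{\ddag}(E) = 0$.

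Finally, $(4)\Rightarrow(3)$ is trivial, and for $(3)\Rightarrow(4)$ I would invoke the standard fact that $\Selast^{\vee}$ contains no nonzero finite $\Lambda$-submodule: in the good ordinary case this is due to Greenberg \cite{Gre99}, and in the good supersingular case the corresponding statement for the plus/minus dual follows from the work of Kobayashi and Kim. Combined with the finiteness of $\Selast^{\vee}$ coming from $(3)$ via Pontryagin duality, this forces $\Selast^{\vee} = 0$, hence $\Selast = 0$. The hard part is precisely this last implication, which relies on the ``no nonzero finite $\Lambda$-submodule'' property of the Selmer dual in both reduction types; everything else is essentially bookkeeping with Proposition~\ref{prop34}, the structure theorem, and Pontryagin duality.
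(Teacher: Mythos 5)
Your proposal is correct and follows essentially the same route as the paper: $(1)\Leftrightarrow(2)$ via Lemma~\ref{lemma31} and Proposition~\ref{prop34} with $r_E=0$, $(2)\Leftrightarrow(3)$ by the structure theorem, and $(3)\Leftrightarrow(4)$ from the ``no finite submodule'' property of $\Selast^{\vee}$ due to Greenberg (ordinary case) and Kim (supersingular case). The paper phrases the last step as $\Selast$ having no proper finite-index $\Lambda$-submodules rather than $\Selast^{\vee}$ having no nonzero finite $\Lambda$-submodules, but these are the same statement under Pontryagin duality, so the difference is purely cosmetic.
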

\begin{proof}
Since $E$ is assumed to have rank zero, the Tate-Shafarevich group $\Sha(E/\Q)$ is finite.
It follows from Lemma $\ref{lemma31}$ that the Euler characteristic $\chi^{\ddag}(\Gamma, E[p^{\infty}])$ is well-defined, and by Proposition $\ref{prop34}$ that $\eqref{35p1}$ and $\eqref{35p2}$ are equivalent.
From the Structure Theorem of finitely generated $\Lambda$-modules, it is clear that $\eqref{35p2}$ and $\eqref{35p3}$ are equivalent conditions.
It is known that $\Selast$ contains no proper finite index submodules (see \cite[Proposition 4.14]{Gre99} for the case when $p$ is ordinary and \cite[Theorem 3.14]{Kim13} for when $p$ is supersingular).
Hence, $\eqref{35p3}$ and $\eqref{35p4}$ are equivalent.
\end{proof}

Let $E_{/\Q}$ be an elliptic curve.
Denote by $S^{\bad}$ the finite set of primes at which $E$ has bad reduction.
Denote by $S^{\good}$ the primes at which $E$ has good reduction (i.e., either ordinary reduction or supersingular reduction); write $S^{\good}=S^{\ord}\cup S^{\op{ss}}$.
When $E$ has good reduction at a fixed prime $p$, we set $\widetilde{E}$ to denote the reduced curve over $\F_p$.

The following result was initially proved by Greenberg for good ordinary primes. 
\begin{Th}\label{theorem1}
Let $E$ be an elliptic curve over $\Q$ such that $E(\Q)$ is finite.
Let $\Sigma\subset S^{\ord}$ be the set of primes at which $p$ divides $\# \widetilde{E}(\F_p)$.
Let $\Sigma'\subset S^{\ord}$ be the finite set of primes $p$ such that either
\begin{enumerate}
\item $p=2$.
\item $p$ divides $\# \Sha(E/\Q)$.
\item $p$ divides the Tamagawa product $\prod_{l\in S^{\bad}} c_l(E)$.
\end{enumerate}
Then for all primes $p\in S^{\ord}\setminus (\Sigma \cup \Sigma')$, we have that $\Selp=0$.
\end{Th}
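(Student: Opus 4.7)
The plan is to reduce the statement to an Euler-characteristic computation and then invoke Greenberg's explicit formula for $\chi(\Gamma, E[p^{\infty}])$ in the good ordinary case.

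Since $E(\Q)$ is finite, Corollary~\ref{corollary35} gives the chain of equivalences: $\Selp = 0$ if and only if $\chi(\Gamma, E[p^{\infty}]) = 1$. So my first step is to show that under the hypotheses of the theorem, the classical Euler characteristic of the $p$-primary Selmer group equals $1$ for every $p \in S^{\ord}\setminus(\Sigma\cup\Sigma')$. Note that Lemma~\ref{lemma31} guarantees that $\chi(\Gamma, E[p^{\infty}])$ is well-defined for every odd prime of good reduction, since $E(\Q)$ is finite.

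Next, I would invoke Greenberg's Euler characteristic formula for good ordinary primes (\cite[Theorem 4.1]{Gre99}), which gives
\[
\chi(\Gamma, E[p^{\infty}]) \sim \frac{\#\Sha(E/\Q)[p^{\infty}] \cdot \prod_{l\in S^{\bad}} c_l^{(p)}(E) \cdot \left(\#\widetilde{E}(\F_p)[p^{\infty}]\right)^2}{\left(\#E(\Q)[p^{\infty}]\right)^2},
\]
where $c_l^{(p)}(E)$ denotes the $p$-part of the Tamagawa number $c_l(E)$. Now I examine each factor under our hypotheses. If $p \notin \Sigma$, then by definition $p \nmid \#\widetilde{E}(\F_p)$, so $\#\widetilde{E}(\F_p)[p^{\infty}] = 1$. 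If $p \notin \Sigma'$, then simultaneously $p \nmid \#\Sha(E/\Q)$, $p \nmid \prod_{l\in S^{\bad}} c_l(E)$, and $p > 2$, so $\#\Sha(E/\Q)[p^{\infty}] = 1$ and each $c_l^{(p)}(E) = 1$.

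Finally, I need to rule out $p$-torsion in $E(\Q)$. Since $p$ is odd and $E$ has good reduction at $p$, the formal group of $E$ at $p$ is torsion-free, so the reduction map induces an injection $E(\Q)[p^{\infty}] \hookrightarrow \widetilde{E}(\F_p)$. Because $p \notin \Sigma$, the right-hand side has trivial $p$-part, forcing $E(\Q)[p^{\infty}] = 0$. Combining all the factors, the numerator and denominator in Greenberg's formula are each equal to $1$, and therefore $\chi(\Gamma, E[p^{\infty}]) = 1$, as required. The only real content of the proof lies in Greenberg's formula itself, which we cite; the rest is bookkeeping to check that each of the obstructions in the numerator is a $p$-adic unit precisely when $p$ avoids the sets $\Sigma$ and $\Sigma'$.
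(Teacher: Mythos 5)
Your proof is correct and follows essentially the same route as the paper's: reduce to showing $\chi(\Gamma, E[p^{\infty}])=1$ via Corollary~\ref{corollary35}, then read the conclusion off the Euler characteristic formula~\eqref{ecf1}. The only stylistic difference is in handling the denominator: you argue directly that $E(\Q)[p^{\infty}]=0$ by injectivity of reduction at a good odd prime, whereas the paper sidesteps this by observing that $\chi$ is an integer (hence a nonnegative power of $p$), so a unit numerator already forces the whole expression to be a unit; both are fine, and yours is arguably more self-contained.
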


Observe that $\Sigma^\prime$ is always a finite set.
However, the set of primes $\Sigma$, referred to as the set of \emph{anomalous primes}, is known to be a set of Dirichlet density zero.
The Lang-Trotter Conjecture predicts that the proportion of anomalous primes $p<X$ is $C \cdot \frac{\sqrt{X}}{\log X}$ for some constant $C$.
It has recently been shown that all but a density zero set of elliptic curves have infinitely many anomalous primes \cite[Corollary 4.3]{BBKNT19}.
However, in special cases one can in fact show that $C=0$ \cite[\S 1.1]{Rid10}.
By the Hasse inequality, a prime $p\geq 7$ is anomalous for an elliptic curve defined over $\Q$ if and only if $a_p = p+ 1 - \#\widetilde{E}(\mathbb{F}_p) = 1$.
When $p\geq 7$ and an elliptic curve has 2-torsion, $a_p$ is even and hence $C=0$.
For more examples where the set $\Sigma$ is finite, and detailed discussion on this subject, we refer the reader to \cite{mazur72, Ols76, Qin16}.

\begin{proof}[Proof of Theorem $\ref{theorem1}$] Since the Mordell-Weil rank of $E$ is assumed to be zero, the Euler characteristic $\chi(\Gamma, E[p^{\infty}])$ is well-defined.
The following formula for the Euler characteristic is well known (see \cite[Theorem 3.3]{CS00book})
\begin{equation}\label{ecf1}
\chi(\Gamma, E[p^{\infty}])\sim \frac{\# \Sha(E/\Q)[p^{\infty}]\times \left(\prod_{l\in S^{\bad}}c_l(E)\right)\times \left(\# \widetilde{E}(\F_p)\right)^2}{\left(\# E(\Q)[p^\infty]\right)^2}.
\end{equation}
Since the Euler characteristic is an integer, the result follows immediately from Corollary $\ref{corollary35}$.
\end{proof}

We prove a similar result for primes of supersingular reduction.
\begin{Th}\label{theorem2}
Let $E$ be an elliptic curve over $\Q$ such that $E(\Q)$ is finite. Let $\Upsilon\subset S^{\op{ss}}$ be the finite set of primes $p$ such that either
\begin{enumerate}
\item $p=2$.
 \item $p$ divides $\# \Sha(E/\Q)$.
 \item $p$ divides the Tamagawa product $\prod_{l\in S^{\bad}} c_l(E)$.
\end{enumerate}
Then for all primes $p\in S^{\op{ss}}\setminus \Upsilon$, we have that \[\Sel_{p^{\infty}}^+(E/\Q^{\cyc})=0 \text{ and }\Sel_{p^{\infty}}^-(E/\Q^{\cyc})=0.\]
\end{Th}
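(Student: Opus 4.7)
The plan is to mirror the proof of Theorem $\ref{theorem1}$, with the supersingular analog of the Euler characteristic formula $\eqref{ecf1}$ in place of Greenberg's ordinary formula. Since $E(\Q)$ is finite by hypothesis, Lemma $\ref{lemma31}$ guarantees that the classical Euler characteristic $\chi^{\ddag}(\Gamma, E[p^{\infty}])$ is well-defined for each sign $\ddag \in \{+,-\}$. By Corollary $\ref{corollary35}$, the vanishing $\Sel_{p^{\infty}}^{\ddag}(E/\Q^{\cyc}) = 0$ is equivalent to $\chi^{\ddag}(\Gamma, E[p^{\infty}]) = 1$, and this Euler characteristic is a positive integer by Lemma $\ref{lemmazerbes}$. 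Thus it suffices to show that $\chi^{\ddag}(\Gamma, E[p^{\infty}])$ is a $p$-adic unit for each choice of sign.

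The key input is the supersingular analog of $\eqref{ecf1}$ for the plus/minus Selmer groups (due to B.D. Kim and others). Up to a $p$-adic unit, this formula expresses $\chi^{\ddag}(\Gamma, E[p^{\infty}])$ as
\[
\frac{\#\Sha(E/\Q)[p^{\infty}] \cdot \left(\prod_{l \in S^{\bad}} c_l(E)\right) \cdot \left(\#\widetilde{E}(\F_p)\right)^2}{\left(\#E(\Q)[p^{\infty}]\right)^2},
\]
closely paralleling $\eqref{ecf1}$. The essential point distinguishing the supersingular case is that for $p \in S^{\op{ss}}$, the Hasse bound $|a_p| \leq 2\sqrt{p}$ combined with the congruence $a_p \equiv 0 \pmod p$ forces $\#\widetilde{E}(\F_p)$ to be coprime to $p$: for $p \geq 5$ one has $a_p = 0$ and $\#\widetilde{E}(\F_p) = p+1$, while for $p = 3$ direct inspection of $a_3 \in \{-3, 0, 3\}$ gives $\#\widetilde{E}(\F_3) \in \{7, 4, 1\}$. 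In particular, no analog of the anomalous-prime set $\Sigma$ of Theorem $\ref{theorem1}$ is required.

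For $p \notin \Upsilon$, the remaining factors are also $p$-adic units: $p \nmid \#\Sha(E/\Q)$ and $p \nmid \prod_{l \in S^{\bad}} c_l(E)$ hold by assumption, while $\#E(\Q)[p^{\infty}]$ is automatically a $p$-adic unit since the reduction map $E(\Q)_{\op{tors}} \to \widetilde{E}(\F_p)$ is injective for $p \geq 3$ (its kernel is the $p$-torsion-free formal group $\widehat{E}(p\Z_p)$), and $\#\widetilde{E}(\F_p)$ is coprime to $p$ as above. Consequently $\chi^{\ddag}(\Gamma, E[p^{\infty}]) = 1$, and Corollary $\ref{corollary35}$ yields the vanishing of both the plus and the minus Selmer groups.

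The principal obstacle is identifying and citing the correct supersingular Euler characteristic formula, and in particular checking that both sign choices $\ddag = +, -$ produce the same numerator and denominator up to $p$-adic units. Beyond that, the argument is a close mimic of Theorem $\ref{theorem1}$: supersingularity at $p$ automatically neutralizes both the local factor at $p$ and the anomalous-primes restriction, leaving only the global hypotheses on $\Sha(E/\Q)$ and the Tamagawa product to impose via the finite set $\Upsilon$.
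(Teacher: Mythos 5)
Your proof follows the same strategy as the paper's: reduce to showing $\chi^{\ddag}(\Gamma,E[p^{\infty}])$ is a $p$-adic unit via Corollary \ref{corollary35}, then invoke a supersingular Euler characteristic formula. The one point worth noting is the form of that formula. You write it by analogy with \eqref{ecf1}, keeping the factor $\left(\#\widetilde{E}(\F_p)\right)^2/\left(\#E(\Q)[p^{\infty}]\right)^2$, and then argue separately that both pieces are $p$-adic units for supersingular $p$. The formula the paper actually cites, from Lei--Sujatha \cite[Theorem~5.15]{leisujatha}, is cleaner:
\[
\chi^{\pm}(\Gamma, E[p^{\infty}])\sim \#\Sha(E/\Q)[p^{\infty}]\times \prod_{l\in S^{\bad}}c_l(E),
\]
with no $\widetilde{E}(\F_p)$ or torsion term at all. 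Your version agrees with this up to a $p$-adic unit precisely because $\#\widetilde{E}(\F_p)$ is coprime to $p$ (supersingularity) and $E(\Q)[p]=0$ (the paper argues this from irreducibility of $E[p]$ as a $G_\Q$-module, while you use injectivity of reduction on prime-to-$p$ torsion; both are fine). So your argument is sound, but the observation that ``supersingularity neutralizes the anomalous-prime restriction'' is more accurately explained as: that term simply does not appear in the supersingular Euler characteristic formula, so there is nothing to neutralize. You correctly flagged the formula as the principal obstacle; citing Lei--Sujatha closes it.
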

\begin{proof}
Since $E(\Q)$ is assumed to be finite, the Euler characteristic $\chi^{\pm}(\Gamma, E[p^{\infty}])$ is well-defined.
Since $E$ has supersingular reduction at $p$, $E[p]$ is irreducible as a $\op{G}_{\Q}$-module; hence, $E(\Q)[p]$ is the trivial group.
Finiteness of $E(\Q)$ implies that $E(\Q)\otimes \Q_p/\Z_p=0$. 
Therefore
\[\# \Sel_{p^{\infty}}(E/\Q)=\# \Sha(E/\Q)[p^{\infty}].\]
By \cite[Theorem 5.15]{leisujatha}, we know that
\begin{equation}\label{ecf2}\chi^{\pm}(\Gamma, E[p^{\infty}])=\#\Sha(E/\Q)[p^{\infty}]\times \left(\prod_{l\in S^{\bad}}c_l(E)\right).\end{equation}
The result follows from Corollary $\ref{corollary35}$.
\end{proof}

\begin{Remark}
\label{remark: trivial rank every layer}
It is clear that the $\mu$ and $\lambda$ invariants do not change for each of the layers of the cyclotomic $\mathbb{Z}_p$-extension.
Let $E_{/\Q}$ be a rank 0 elliptic curve and $p\in S^{\ord}\setminus (\Sigma \cup \Sigma^{\prime})$ or $p\in S^{\op{ss}}\setminus \Upsilon$. 
Since $\lambda_p^{\ddag}(E) \geq r_E$, the Mordell-Weil rank of the elliptic curve must remain zero at each layer in the cyclotomic $\mathbb{Z}_p$-extension.
\end{Remark}
The following result gives evidence for the conjecture of Coates and Sujatha on the vanishing of the $\mu$-invariant of fine Selmer groups.
In fact, more is true.

\begin{Corollary}
\label{corollary: conjecture A}
Let $E$ be a rank 0 elliptic curve defined over $\Q$.
Then, the $p$-primary fine Selmer group $\Sel^0_{p^\infty}(E/\Q^{\cyc})$ is trivial for density one primes.
\end{Corollary}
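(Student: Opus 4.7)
The plan is to package Theorems \ref{theorem1} and \ref{theorem2} with the elementary fact that the fine Selmer group sits inside $\Selast$. First I would unwind the definitions: at every place $v\in S$, the fine local condition $\mathcal{K}_v(E/\Q^{\cyc})$ is the full local cohomology $\bigoplus_{\eta\mid v}H^1(\Q^{\cyc}_\eta,E[p^\infty])$, so a class belonging to $\Sel^0_{p^\infty}(E/\Q^{\cyc})$ is required to vanish locally at every $v\in S$. At the prime above $p$ this is strictly more restrictive than the Selmer condition $E(\Q^{\cyc}_\fp)\otimes\Q_p/\Z_p$ in the good ordinary case, and than either of the Kobayashi conditions $\widehat{E}^\pm(\Q^{\cyc}_\fp)\otimes\Q_p/\Z_p$ in the good supersingular case; at the other primes of $S$ the two conditions coincide. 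Hence one gets an inclusion
\[
\Sel^0_{p^\infty}(E/\Q^{\cyc})\subseteq \Selast
\]
for either reduction type at $p$ (and for either choice of sign $\ddag$ when $E$ has supersingular reduction at $p$).

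Granting this inclusion, it suffices to exhibit a density-one set of odd primes $p$ at which $\Selast=0$. I would take the union of the prime sets identified in Theorems \ref{theorem1} and \ref{theorem2}, namely
\[
\bigl(S^{\ord}\setminus(\Sigma\cup\Sigma')\bigr)\cup\bigl(S^{\op{ss}}\setminus\Upsilon\bigr).
\]
The complement of this set inside the set of odd primes is contained in $S^{\bad}\cup\Sigma'\cup\Upsilon\cup\Sigma$: indeed, an odd prime of good reduction not in the above union must either be ordinary and lie in $\Sigma\cup\Sigma'$, or be supersingular and lie in $\Upsilon$. Now $S^{\bad}$, $\Sigma'$, and $\Upsilon$ are finite, and the set $\Sigma$ of anomalous primes has Dirichlet density zero, as recalled in the discussion following Theorem \ref{theorem1}. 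Hence the complement has Dirichlet density zero, so $\Selast=0$ on a density-one set of primes, and by the inclusion above the same holds for the fine Selmer group.

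I do not expect any real obstacle; the corollary is essentially a formal consequence of Theorems \ref{theorem1} and \ref{theorem2} together with the density-zero statement for anomalous primes. The only substantive input beyond bookkeeping is the containment $\Sel^0_{p^\infty}(E/\Q^{\cyc})\subseteq \Selast$, which is immediate from comparing local conditions since the fine Selmer local condition at $\fp$ is the trivial subgroup of $H^1(\Q^{\cyc}_\fp,E[p^\infty])$.
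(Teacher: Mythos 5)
Your proposal is correct and follows exactly the same route as the paper's proof: use the containment $\Sel^0_{p^\infty}(E/\Q^{\cyc})\subseteq \Selast$ (from the comparison of local conditions), then apply Theorems \ref{theorem1} and \ref{theorem2} to see that $\Selast$ vanishes away from $S^{\bad}\cup\Sigma\cup\Sigma'\cup\Upsilon$, a set of density zero. The only difference is that you spell out the bookkeeping the paper leaves to the reader.
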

\begin{proof}
Given any elliptic curve, it has good reduction at all but finitely many primes.
When $p$ is a prime of good (ordinary or supersingular) reduction, note that the $p$-primary fine Selmer group is a subgroup of $\Sel^{\ddag}_{p^\infty}(E/\Q^{\cyc})$.
The result is immediate from Theorems \ref{theorem1} and \ref{theorem2}.
\end{proof}


\subsection{Elliptic curves over $\Q$ with positive rank}
Next, we consider the case when the elliptic curve $E$ has positive rank. 

When $E$ has good ordinary reduction at $p$, there is a notion of the $p$-adic height pairing, which is the $p$-adic analog of the usual height pairing, and was studied extensively in \cite{schneider1, schneider2}.
This pairing is conjectured to be non-degenerate, and the $p$-adic regulator $R_p(E/\Q)$ is defined to be the determinant of this pairing.
\begin{Th}\label{pbsdconj}[{B. Perrin-Riou \cite{perrin94}, P. Schneider \cite[Theorem 2']{schneider2}}]
Let $E_{/\Q}$ be an elliptic curve of Mordell-Weil rank $r_E$ with good ordinary reduction at $p$.
Assume that
\begin{enumerate}
 \item $\Sha(E/\Q)[p^{\infty}]$ is finite.
 \item the $p$-adic height pairing is non-degenerate, i.e., the $p$-adic regulator is non-zero.
\end{enumerate}
Then, the order of vanishing of $f_E^{(p)}(T)$ at $T=0$ is equal to $r_E:=\op{rank} E(\Q)$. Express $f_E^{(p)}(T)$ as $T^{r_{E}}g_E^{(p)}(T)$. Then, $g_E^{(p)}(0)\neq 0$ and 
\[g_E^{(p)}(0)\sim \frac{\left(\frac{R_p(E/\Q)}{p^{r_E}}\right)\times \# \Sha(E/\Q)[p^{\infty}]\times \left(\prod_{l\in S^{\bad}}c_l(E)\right)\times \left(\# \widetilde{E}(\F_p)\right)^2}{\left( \# E(\Q)[p^\infty]\right)^2}.\]
\end{Th}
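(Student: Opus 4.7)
The plan is to apply Zerbes's Lemma (Lemma \ref{lemmazerbes}) to $M := \Sel_{p^{\infty}}(E/\Q^{\cyc})$, which gives $c_{r_M} \sim \chi_t(\Gamma, M)$ once the truncated Euler characteristic is defined. First I would check that $\Phi_M$ has finite kernel and cokernel. By Mazur's control theorem (Theorem \ref{control}) together with the short exact sequence \eqref{sesSelmer} and the assumed finiteness of $\Sha(E/\Q)[p^{\infty}]$, one has $\corank_{\Z_p} M^{\Gamma} = r_E$, and then Lemma \ref{balancedrank} gives $\corank_{\Z_p} M_{\Gamma} = r_E$ as well. Consequently, $\Phi_M$ has finite kernel and cokernel if and only if its restriction to the maximal divisible subgroups on either side is injective with finite cokernel, and this restriction is (after Pontryagin duality) governed by the matrix of the $p$-adic height pairing on $E(\Q) \otimes \Z_p$. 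The assumed non-degeneracy of that pairing thus furnishes exactly the required finiteness. With $\chi_t(\Gamma, M)$ defined, Lemma \ref{lemma32} yields $r_M = r_E$, so writing $f_E^{(p)}(T) = T^{r_E} g_E^{(p)}(T)$, part (3) of Lemma \ref{lemmazerbes} gives $g_E^{(p)}(0) \sim \chi_t(\Gamma, M)$.

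The remaining task is to evaluate $\chi_t(\Gamma, M) = \#\coker(\Phi_M)/\#\ker(\Phi_M)$. I would follow the pattern of the rank-zero Euler characteristic formula \eqref{ecf1}: a snake-lemma comparison, via Theorem \ref{control} and \eqref{sesSelmer}, expresses $M^{\Gamma}$ up to finite universal error as built from $E(\Q) \otimes \Q_p/\Z_p$, $\Sha(E/\Q)[p^{\infty}]$, and the local defects at primes in $S^{\bad} \cup \{p\}$. The local defects contribute the Tamagawa factors $\prod_{l \in S^{\bad}} c_l(E)$ and, from the good ordinary prime $p$, the factor $\bigl(\#\widetilde{E}(\F_p)\bigr)^2$, while the torsion subgroup $E(\Q)[p^{\infty}]$ enters through the $(\# E(\Q)[p^{\infty}])^{-2}$ denominator exactly as in \eqref{ecf1}. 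Finally, the contribution from the divisible $(\Q_p/\Z_p)^{r_E}$-quotient of $M^{\Gamma}$ is the determinant of the induced map, which under the identification above is the Gram matrix of the $p$-adic height pairing on a $\Z_p$-basis of $E(\Q)/E(\Q)_{\op{tors}}$. This determinant equals $R_p(E/\Q)$ up to a normalizing power of $p$; absorbing Schneider's normalization produces the displayed $R_p(E/\Q)/p^{r_E}$ factor, and assembling the remaining pieces recovers the claimed formula up to a $p$-adic unit.

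The principal obstacle, which is really the content of Schneider's and Perrin-Riou's theorem rather than routine bookkeeping, is the identification of $\Phi_M$ restricted to the divisible part of $M^{\Gamma}$ with the map arising from the $p$-adic height pairing. This identification proceeds via the Hochschild--Serre spectral sequence for $\Q^{\cyc}/\Q$: the connecting map $H^0(\Gamma, M) \to H^1(\Gamma, M) = M_{\Gamma}$, restricted to the Kummer image of $E(\Q) \otimes \Q_p/\Z_p$, unwinds after a careful local--global analysis to the analytically defined $p$-adic height. Once this dictionary is in place, the remaining decomposition of $\#\ker(\Phi_M)$ and $\#\coker(\Phi_M)$ into Tamagawa, reduction, torsion, and Shafarevich--Tate contributions proceeds in the same spirit as the proof of Theorem \ref{theorem1}.
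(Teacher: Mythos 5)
The paper does not actually prove Theorem~\ref{pbsdconj}: it is cited as a known result of Perrin-Riou and Schneider, and the text immediately following the statement notes that the result ``does not assume that the truncated Euler characteristic is well-defined'' and ``is proven in greater generality, in fact, for abelian varieties over number fields.'' Your proposed route through $\chi_t(\Gamma, \Selp)$ is therefore not the path the paper takes (the paper simply cites), and it also does not amount to a proof. You yourself locate the gap: the assertion that the restriction of $\Phi_M$ to the divisible part of $M^{\Gamma}$ is computed by the $p$-adic height pairing is, as you write, ``really the content of Schneider's and Perrin-Riou's theorem rather than routine bookkeeping,'' and your sketch does not establish it. Moreover, your argument that $\Phi_M$ has finite kernel and cokernel --- the hypothesis needed to invoke Zerbes's Lemma at all --- already rests on that same unproved identification (you appeal to the non-degeneracy of the height pairing \emph{through} the identification to conclude finiteness), so the portion of the proposal that purports to verify the applicability of Lemma~\ref{lemmazerbes} is not self-contained. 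In short, the proposal is a reasonable heuristic sketch of why the theorem should be true, consistent with the rank-zero Euler characteristic picture in \eqref{ecf1}, but it defers the crux entirely to the sources it is supposed to be independent of, and it does not close the logical loop needed to even get the truncated Euler characteristic off the ground.
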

The above theorem implies that the right hand side of the equation is an integer since the left hand side is.
This result does not assume that the truncated Euler characteristic is well-defined.
The statement is proven in greater generality, in fact, for abelian varieties over number fields.

\begin{Corollary}
Let $E_{/\Q}$ be an elliptic curve of Mordell-Weil rank $r_E$ with good ordinary reduction at $p$.
Assume that
\begin{enumerate}
 \item $\Sha(E/\Q)[p^{\infty}]$ is finite.
 \item the $p$-adic Height pairing is non-degenerate.
 \item the truncated Euler characteristic $\chi_t(\Gamma,E[p^{\infty}])$ is defined.
\end{enumerate}
Then, we have that
\[
\chi_t(\Gamma, E[p^{\infty}])\sim \frac{\left(\frac{R_p(E/\Q)}{p^{r_E}}\right)\times \# \Sha(E/\Q)[p^{\infty}]\times \left(\prod_{l\in S^{\bad}}c_l(E)\right)\times \left(\# \widetilde{E}(\F_p)\right)^2}{\left( \# E(\Q)[p^\infty]\right)^2}.
\]
\end{Corollary}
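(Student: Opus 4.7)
The plan is to obtain the formula as an immediate combination of Theorem \ref{pbsdconj} with Zerbes's Lemma \ref{lemmazerbes}, applied to the $\Lambda$-module $\rm{M}=\Selp$ (so that $f_{\rm{M}}^{(p)}(T)=f_E^{(p)}(T)$).

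First, under hypothesis (3) the map $\Phi_{\rm{M}}\colon \Selp^{\Gamma}\to \Selp_{\Gamma}$ has finite kernel and cokernel, so Lemma \ref{lemmazerbes} applies. Writing $f_E^{(p)}(T)=c_0+c_1T+\cdots+c_dT^d$, parts (1) and (3) of that lemma give
\[
r_{\rm{M}}=\corank_{\Z_p}\Selp^{\Gamma},\qquad c_{r_{\rm{M}}}\sim \chi_t(\Gamma,E[p^{\infty}]).
\]
Next, hypotheses (1) and (2) put us in the situation of Theorem \ref{pbsdconj}, which asserts that the order of vanishing of $f_E^{(p)}(T)$ at $T=0$ is precisely $r_E$; in particular $r_{\rm{M}}=r_E$. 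This is consistent with Lemma \ref{lemma32}, and it means that the factorization $f_E^{(p)}(T)=T^{r_E}g_E^{(p)}(T)$ of Theorem \ref{pbsdconj} satisfies $g_E^{(p)}(0)=c_{r_E}=c_{r_{\rm{M}}}$.

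Combining these two identifications,
\[
\chi_t(\Gamma, E[p^{\infty}])\;\sim\;c_{r_{\rm{M}}}\;=\;g_E^{(p)}(0),
\]
and then substituting the explicit formula for $g_E^{(p)}(0)$ supplied by Theorem \ref{pbsdconj} yields exactly the expression claimed in the corollary.

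There is no real obstacle beyond bookkeeping: the only point that requires a moment's care is verifying that the $c_{r_{\rm{M}}}$ produced by Lemma \ref{lemmazerbes} is literally the value $g_E^{(p)}(0)$ appearing in Theorem \ref{pbsdconj}, which follows at once because both are the leading nonzero coefficient of $f_E^{(p)}(T)$ at $T=0$ and $r_{\rm{M}}=r_E$ by the matching of orders of vanishing. All three hypotheses (finiteness of $\Sha(E/\Q)[p^{\infty}]$, non-degeneracy of the $p$-adic height pairing, and well-definedness of $\chi_t$) are used precisely once: the first two to invoke Theorem \ref{pbsdconj}, the third to invoke Lemma \ref{lemmazerbes}.
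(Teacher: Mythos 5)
Your proof is correct and takes essentially the same route as the paper, which simply cites Theorem \ref{pbsdconj} and Lemma \ref{lemmazerbes} as a one-line proof; you have expanded exactly that argument, correctly identifying $r_{\rm M}=r_E$ and $c_{r_{\rm M}}=g_E^{(p)}(0)$ so that the Euler characteristic formula follows by substitution.
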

\begin{proof}
The assertion a direct consequence of Theorem $\ref{pbsdconj}$ and Lemma $\ref{lemmazerbes}$.
\end{proof}

To analyze Iwasawa invariants on average, we shall apply Theorem $\ref{pbsdconj}$.
Let $E_{/\Q}$ be an elliptic curve.
As before, we denote by $S^{\ord}$ the set of primes $p$ at which $E$ has good ordinary reduction. Let $\Sigma$ be the set of anomalous primes and $\Sigma'$ the finite set of primes for which \begin{enumerate}
\item $p=2$,
 \item $p$ divides $\# \Sha(E/\Q)$.
 \item $p$ divides the Tamagawa product $\prod_{l\in S^{\bad}} c_l(E)$.
\end{enumerate}
Denote by $v_p$ the $p$-adic valuation on $\Q_p$ normalized by $v_p(p)=1$. Let $\Pi\subset S^{\ord}$ be the set of primes $p$ at which $v_p(R_p(E/\Q))\geq r_E$. In other words, it is the set of primes for which $p$ divides $\left(\frac{R_p(E/\Q)}{p^{r_E}}\right)$.
When $r_E=0$, the $p$-adic regulator $R_p(E/\Q)=1$, therefore the set $\Pi$ is empty.
On the other hand, the set of primes $\Pi$ need not be empty when $r_E\geq 1$.
The following result generalizes Theorem $\ref{theorem1}$ and is an easy consequence of Theorem $\ref{pbsdconj}$.
\begin{Th}\label{theorem3}
Let $E_{/\Q}$ be an elliptic curve such that $r_E\geq 1$.
Then for all primes $p\in S^{\ord}\setminus (\Sigma \cup \Sigma'\cup \Pi)$, we have that $f_E^{(p)}(T)=T^{r_E}$.
In particular, $\mup=0$ and $\lambdap=r_E$.
\end{Th}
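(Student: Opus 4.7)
The strategy is a direct application of Theorem \ref{pbsdconj}. Implicit in the statement (and necessary to make sense of the set $\Pi$) is the assumption that $\Sha(E/\Q)[p^\infty]$ is finite and the $p$-adic height pairing is non-degenerate. Under these hypotheses, Theorem \ref{pbsdconj} produces a factorization $f_E^{(p)}(T) = T^{r_E} g_E^{(p)}(T)$ with $g_E^{(p)}(0)\neq 0$, and gives an explicit formula for $g_E^{(p)}(0)$ up to units in $\Z_p^{\times}$. The whole proof consists of showing that, under the excluded-set conditions, every factor on the right-hand side of that formula is a $p$-adic unit, so that $g_E^{(p)}(0)\in \Z_p^{\times}$.

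Reading off the formula, I would verify the unit conditions one factor at a time. The regulator contribution $R_p(E/\Q)/p^{r_E}$ is a $p$-adic unit precisely by the definition of $p\notin\Pi$. The factors $\#\Sha(E/\Q)[p^\infty]$ and $\prod_{l\in S^{\bad}}c_l(E)$ are coprime to $p$ by the definition of $p\notin \Sigma'$, and $\#\widetilde{E}(\F_p)$ is coprime to $p$ because $p\notin \Sigma$. For the denominator, I would use the following standard input: since $p$ is odd and $E$ has good reduction at $p$, the kernel of reduction is the formal group $\widehat{E}(p\Z_p)$, which is torsion-free, so the reduction map $E(\Q)_{\op{tors}}\to \widetilde{E}(\F_p)$ is injective. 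Because $p\nmid \#\widetilde{E}(\F_p)$, this forces $E(\Q)[p^\infty]=0$, and hence the denominator $(\#E(\Q)[p^\infty])^2 = 1$.

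Combining these verifications gives $g_E^{(p)}(0)\in \Z_p^{\times}$. By the Weierstrass preparation theorem, any element of $\Lambda$ with $p$-adic unit constant term is itself a unit, so $g_E^{(p)}(T)\in \Lambda^{\times}$. Since the characteristic element is only well-defined up to a unit, this means we may take $f_E^{(p)}(T)=T^{r_E}$, which immediately yields $\mup=0$ and $\lambdap = r_E$. There is no serious obstacle here once Theorem \ref{pbsdconj} is available; the only slightly non-formal step is the torsion injectivity used to eliminate the denominator, and that is a standard consequence of the structure of the formal group for odd primes of good reduction.
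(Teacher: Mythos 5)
Your proof is correct and follows essentially the same route as the paper: apply Theorem \ref{pbsdconj} to factor $f_E^{(p)}(T) = T^{r_E} g_E^{(p)}(T)$, check that every factor in the Euler-characteristic-type formula for $g_E^{(p)}(0)$ is a $p$-adic unit given the excluded sets $\Sigma$, $\Sigma'$, $\Pi$, and then invoke Weierstrass preparation to conclude $g_E^{(p)}(T) = 1$. The only difference is that you explicitly justify the triviality of the denominator $(\#E(\Q)[p^\infty])^2$ via injectivity of the reduction map on torsion, a detail the paper leaves implicit, but this is a cosmetic rather than structural difference.
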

\begin{proof}
Express the characteristic element $f_E^{(p)}(T)$ as $T^{r_{E}}g_E^{(p)}(T)$. Theorem $\ref{pbsdconj}$ asserts that $g_E^{(p)}(0)\neq 0$ and 
\[g_E^{(p)}(0)\sim \frac{\left(\frac{R_p(E/\Q)}{p^{r_E}}\right)\times \# \Sha(E/\Q)[p^{\infty}]\times \left(\prod_{l\in S^{\bad}}c_l(E)\right)\times \left(\# \widetilde{E}(\F_p)\right)^2}{\left( \# E(\Q)[p^\infty]\right)^2}.\]
Thus, $g_E^{(p)}(0)$ is a $p$-adic unit for $p\in S^{\ord}\setminus (\Sigma \cup \Sigma'\cup \Pi)$.
Therefore, $g_E^{(p)}(T)$ is a unit in $\Lambda$.
Since $f_E^{(p)}(T)$ is a product of a distinguished polynomial with a power of $p$, it follows that $g_E^{(p)}(T)=1$ and $f_E^{(p)}(T)=T^{r_E}$.
\end{proof}

While it is known that the set $\Sigma'$ is finite and $\Sigma$ is a density zero set of primes, the same is not known for $\Pi$.
For $N\geq 1$, let $\Pi^{\leq N}$ be the set of primes $p\in \Pi$ for which $5\leq p\leq N$.
Calculations in \cite{CM94} suggest that the set $\Pi$ is likely to have Dirichlet density zero, at least for elliptic curves of rank $1$ (see also \cite{Wut04}).
We compute $\Pi^{\leq 1000}$ for the first 10 elliptic curves $E/\Q$ of rank $2$ (ordered by conductor).
The calculations in the following table were done on sage.
\vspace{1.0cm}
\begin{center}
\begin{tabular}{c|c|c |c|c|c} 
 & Cremona Label & $\Pi^{\leq 1000}$ & & Cremona Label & $\Pi^{\leq 1000}$\\ [1 ex]
 \hline
 $1.$ & $389a$ & $\emptyset$ & $6.$ & $643a$ & $\emptyset$\\
 $2.$ & $433a$ & $\{13\}$& $7.$ & $655a$ & $\{7,31\}$\\
 $3.$ & $446d$ & $\{7\}$&$8.$ & $664a$ & $\{59\}$\\
 $4.$ & $563a$ & $\emptyset$&$9.$ & $681c$ & $\emptyset$ \\
 $5.$ & $571b$ & $\emptyset$&$10.$ & $707a$& \{29\}.\\
\end{tabular}
\end{center}
\vspace{1cm}

\begin{Remark}
For elliptic curves $E_{/\Q}$ with $r_E=1$, the above theorem asserts that for primes outside a set (possibly of Dirichlet density zero), the characteristic element $f_E^{(p)}(T) =T$.
Under the hypotheses that the $p$-adic height pairing on $E(\Q_{n}^{\cyc})$ is non-degenerate for all $n$ and that $\# \Sha(E/\Q_n^{\cyc})[p^\infty]$ is finite for all $n$, we know that $f_{\Sel^0}^{(p)}(T) =1$ (see \cite[pp. 104-105]{Wut04_thesis}).
Thus, in this case, the fine Selmer group $\Sel^0_{p^\infty}(E/\Q^{\cyc})$ is not only cofinitely generated as a $\Z_p$-module, but in fact it is finite.
On the other hand, when $r_E>1$ even though the fine Selmer group should be cofinitely generated as a $\Z_p$-module, it is not expected to be finite. 
\end{Remark}

Next, we consider the supersingular case.
Let $E_{/\Q}$ be an elliptic curve with good supersingular reduction at $p\geq 3$.
Note that $p$ divides $a_p:=p+1-\#\widetilde{E}(\F_p)$.
Hasse's bound states that $\abs{a_p}<2\sqrt{p}$; hence if $p\geq 5$, it forces that $a_p=0$.
However, when $p=3$, it is indeed possible for $a_p\neq 0$.
For simplicity, we shall assume that $a_p=0$.
In this setting, the Main conjectures were formulated in \cite{Kob03}.
When $a_p=0$, the Main conjecture has been proved in a preprint of X. Wan (see \cite{Wan14}).
Let $\ddag\in \{+,-\}$ be a choice of sign.
Perrin Riou \cite{PR93} formulated a $p$-adic L-function in the supersingular case, which is closely related to the plus and minus $p$-adic $L$-function defined by R. Pollack.
The $p$-adic Birch and Swinnerton-Dyer conjecture formulated by D. Bernardi and Perrin-Riou in \cite{BPR} predicts a formula for the leading term of the $p$-adic L-function.
This conjecture is reformulated in terms of Pollack's $p$-adic $L$-functions in \cite{sprung15}.

Let $\log_p$ be a branch of the $p$-adic logarithm, $\chi$ the $p$-adic cyclotomic character, and $r_E$ be the Mordell-Weil rank of $E$.
Let $\ddag\in \{+,-\}$, and denote by $R^{\ddag}_p(E/\Q)$ the signed $p$-adic regulator (defined up to $p$-adic unit).
The convention in \textit{loc. cit.} is to choose a generator $\gamma$ of the cyclotomic $\Z_p$ extension, and divide the regulator (defined w.r.t. the choice of $\gamma$) by $\log_p(\chi(\gamma))^{r_E}$.
We are however, not interested in the exact value of the regulator, but only the value up to a $p$-adic unit. 
Therefore, we simply work with the fraction $\left(\frac{R^{\ddag}_p(E/\Q)}{p^{r_E}}\right)$.
Lemma $\ref{lemma32}$ asserts that the order of vanishing of $f_E^{(p), \ddag}(T)$ at $T=0$ is equal to $r_E$.
Express $f_E^{(p), \ddag}(T)$ as a product $T^{r_E}g_E^{(p), \ddag}(T)$.
The following conjecture is equivalent to the $p$-adic Birch and Swinnerton-Dyer conjecture.
\begin{Conjecture}\label{supersingularbsd}
Let $E$ be an elliptic curve with good supersingular reduction at the prime $p$ and $\ddag\in \{+,-\}$.
Then,  
\[g_E^{(p), \ddag}(0)\sim \left(\frac{R_p^{\ddag}(E/\Q)}{p^{r_E}}\right)\times \# \Sha(E/\Q)[p^{\infty}]\times \left(\prod_{l\in S^{\bad}}c_l(E)\right).\]
\end{Conjecture}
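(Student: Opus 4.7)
\emph{Proof proposal.} The plan is to derive the formula from the $p$-adic Birch and Swinnerton-Dyer conjecture of Bernardi and Perrin-Riou by combining two ingredients. The first is Wan's proof of Kobayashi's Iwasawa Main Conjecture in the supersingular case $a_p = 0$: the characteristic element $f_E^{(p),\ddag}(T)$ agrees with Pollack's signed $p$-adic $L$-function $L_p^{\ddag}(E,T)$ up to a unit in $\Lambda$. The second is Sprung's reformulation of the Bernardi--Perrin-Riou leading-term formula in terms of Pollack's $L$-functions and the signed regulators $R_p^{\ddag}(E/\Q)$.

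First, I would apply Wan's theorem to obtain $f_E^{(p),\ddag}(T) \sim L_p^{\ddag}(E,T)$ as elements of $\Lambda$ modulo $\Lambda^{\times}$. Factoring $L_p^{\ddag}(E,T) = T^{s}\,\ell_E^{(p),\ddag}(T)$ with $\ell_E^{(p),\ddag}(0)\neq 0$, Lemma $\ref{lemma32}$ forces $s = r_E$, and hence $g_E^{(p),\ddag}(0)\sim \ell_E^{(p),\ddag}(0)$.

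Next, I would invoke the Bernardi--Perrin-Riou $p$-adic BSD conjecture in Sprung's reformulation, which asserts (for $a_p = 0$)
\[
\ell_E^{(p),\ddag}(0) \sim \frac{\left(R_p^{\ddag}(E/\Q)/p^{r_E}\right)\cdot \#\Sha(E/\Q)[p^{\infty}]\cdot \prod_{l\in S^{\bad}}c_l(E)}{(\#E(\Q)[p^{\infty}])^2}.
\]
Since $E$ has supersingular reduction at $p\geq 5$, the mod-$p$ representation $E[p]$ is irreducible as a $\op{G}_{\Q}$-module; in particular $E(\Q)[p^\infty] = 0$, so the denominator is $1$. Combining this with the previous step yields the claim.

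The main obstacle is technical bookkeeping rather than conceptual difficulty: one must verify carefully that Sprung's reformulation matches the signed regulator $R_p^{\ddag}(E/\Q)$ appearing in the statement, which requires tracking the choice of topological generator $\gamma$ of $\Gamma$, the branch of $\log_p$, and the normalization of the Coleman maps underlying the $\pm$-decomposition of Pollack's $p$-adic $L$-functions. The fact that the identification is only up to $p$-adic units (via the relation $\sim$) alleviates most of these normalization issues. The statement necessarily remains a conjecture overall because the Bernardi--Perrin-Riou $p$-adic BSD conjecture is itself open; Wan's Main Conjecture and Sprung's reformulation together reduce the problem to $p$-adic BSD.
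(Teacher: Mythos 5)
The statement you are addressing is labeled a \emph{Conjecture} in the paper, and the authors do not give a proof; they only remark that it is equivalent to the $p$-adic Birch and Swinnerton-Dyer conjecture of Bernardi--Perrin-Riou, citing Wan's proof of the signed Main Conjecture (for $a_p=0$) and Sprung's reformulation of the $p$-adic BSD leading-term formula. Your proposal reconstructs exactly this chain of reasoning --- Main Conjecture to pass from $f_E^{(p),\ddag}$ to Pollack's $L_p^{\ddag}$, Sprung's formula for its leading term, and the observation that $E(\Q)[p^\infty]=0$ in the supersingular case kills the torsion denominator --- and correctly concludes that the statement cannot be proved outright since $p$-adic BSD is open; so you have taken essentially the same route the paper intends, made explicit.
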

Let $\Pi^{\ddag}\subset S^{\op{ss}}$ be the set of primes $p$ at which $v_p(R_p^{\ddag}(E/\Q))\geq r_E$.
In other words, it is the set of primes for which $p$ divides $\left(\frac{R_p^{\ddag}(E/\Q)}{p^{r_E}}\right)$.
The following result is proved using the same strategy as that of Theorem $\ref{theorem3}$, so we skip the proof.
\begin{Th}\label{theorem4}
Assume that Conjecture $\ref{supersingularbsd}$ holds.
Let $E_{/\Q}$ be an elliptic curve with Mordell-Weil rank $r_E\geq 1$.
Let $p$ be a prime at which $E$ has good supersingular reduction.
Then for all primes $p\in S^{\op{ss}}\setminus (\Sigma \cup \Sigma'\cup \Pi^{\ddag})$, we have that $f_E^{(p),\ddag}(T)=T^{r_E}$. In particular, $\mu_p^{\ddag}(E)=0$ and $\lambda_p^{\ddag}(E)=r_E$.
\end{Th}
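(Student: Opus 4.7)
The strategy is to mirror the proof of Theorem \ref{theorem3}, replacing the input of Theorem \ref{pbsdconj} by its supersingular counterpart, namely Conjecture \ref{supersingularbsd}. First, since $\Sha(E/\Q)[p^\infty]$ is implicitly finite under Conjecture \ref{supersingularbsd} (otherwise the leading-term formula is vacuous) and $\chi_t^{\ddag}(\Gamma, E[p^\infty])$ is well-defined in the supersingular case under mild hypotheses, I would invoke Lemma \ref{lemma32} to conclude that the order of vanishing of $f_E^{(p),\ddag}(T)$ at $T=0$ equals $r_E$. Write
\[
f_E^{(p),\ddag}(T) = T^{r_E}\, g_E^{(p),\ddag}(T), \qquad g_E^{(p),\ddag}(0) \neq 0.
\]

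Next, I would apply Conjecture \ref{supersingularbsd} to obtain
\[
g_E^{(p),\ddag}(0)\;\sim\; \left(\tfrac{R_p^{\ddag}(E/\Q)}{p^{r_E}}\right) \cdot \# \Sha(E/\Q)[p^{\infty}] \cdot \prod_{l\in S^{\bad}}c_l(E),
\]
and then show that each of the three factors on the right is a $p$-adic unit under the hypothesis $p \in S^{\op{ss}}\setminus(\Sigma\cup \Sigma'\cup \Pi^{\ddag})$. The factor $R_p^{\ddag}(E/\Q)/p^{r_E}$ is a $p$-adic unit precisely by the definition of $\Pi^{\ddag}$, and the remaining two factors are $p$-adic units by the definition of $\Sigma'$ (note that the exclusion of $\Sigma$ is vacuous here since $\Sigma\subset S^{\ord}$, but it is harmless to include for uniformity with Theorem \ref{theorem3}). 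It follows that $g_E^{(p),\ddag}(0) \in \Z_p^{\times}$.

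The final step is a structural one: since $f_E^{(p),\ddag}(T)$ is, by construction, a power of $p$ times a distinguished polynomial, so is $g_E^{(p),\ddag}(T)$. As $g_E^{(p),\ddag}(0)$ is a unit, no $p$ divides the constant term, so the power of $p$ must be $p^0$; that is, $g_E^{(p),\ddag}(T)$ is itself a distinguished polynomial. A distinguished polynomial with unit constant term is forced to be the constant polynomial $1$, so $g_E^{(p),\ddag}(T)=1$ and therefore $f_E^{(p),\ddag}(T) = T^{r_E}$. Reading off invariants gives $\mu_p^{\ddag}(E)=0$ and $\lambda_p^{\ddag}(E)=r_E$.

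Since the argument is essentially algebraic once Conjecture \ref{supersingularbsd} is granted, there is no serious obstacle; the only mild subtlety is verifying that Lemma \ref{lemma32} applies to pin down the order of vanishing at $T=0$ to $r_E$ in the supersingular setting, which requires confirming that the relevant truncated Euler characteristic is defined. This is known in the supersingular case under the Mazur-type control theorem cited in Theorem \ref{control}, together with finiteness of $\Sha(E/\Q)[p^\infty]$, so the proof carries through verbatim.
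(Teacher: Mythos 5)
Your proposal is correct and matches the paper's intent exactly: the paper explicitly skips the proof of this theorem, stating only that it is "proved using the same strategy as that of Theorem \ref{theorem3}," and your argument is a faithful reconstruction of that strategy with Conjecture \ref{supersingularbsd} substituted for Theorem \ref{pbsdconj} and Lemma \ref{lemma32} supplying the order-of-vanishing input (as the paper itself does in the paragraph just before Conjecture \ref{supersingularbsd}). Your side remark that excluding $\Sigma$ is vacuous for $p\in S^{\op{ss}}$ with $p\geq 5$ (since $a_p=0$ forces $\#\widetilde{E}(\F_p)=p+1$, never divisible by $p$) is a correct and mildly clarifying observation that the paper does not spell out.
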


It seems reasonable to make the following conjecture.
\begin{Conjecture}
Let $E_{/\Q}$ be an elliptic curve of rank $r_E$ with good (ordinary or supersingular) reduction at $p$.
For $100\%$ of the primes, $\mu_p^{\ddag}(E)=0$ and $\lambda_p^{\ddag}(E)=r_E$.
\end{Conjecture}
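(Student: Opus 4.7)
The natural approach is to invoke Theorems \ref{theorem3} and \ref{theorem4}: together they show that the desired equalities $\mu_p^{\ddag}(E)=0$ and $\lambda_p^{\ddag}(E)=r_E$ fail only for primes in the exceptional union $\Sigma \cup \Sigma' \cup \Pi^{\ddag}$. The conjecture therefore reduces to proving that this union has natural density zero among primes of good reduction (of the appropriate type). The finite set $\Sigma'$ is harmless; the set $\Sigma$ of anomalous primes has Dirichlet density zero, as recalled in the discussion following Theorem \ref{theorem1}. All the difficulty is concentrated in $\Pi^{\ddag}$, namely the set of primes at which $v_p\bigl(R_p^{\ddag}(E/\Q)\bigr) > r_E$. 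When $r_E = 0$ one has $R_p^{\ddag} = 1$ and $\Pi^{\ddag} = \emptyset$, so the conjecture in that case follows from Theorems \ref{theorem1} and \ref{theorem2} together with the density statement for $\Sigma$; hence the rank-zero case is already unconditional.

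For $r_E \geq 1$ the plan is to control the $p$-adic valuation of the normalized regulator $R_p^{\ddag}(E/\Q)/p^{r_E}$ as $p$ varies. In rank one, fixing a generator $P$ of $E(\Q)$ modulo torsion, this invariant is $p^{-1}\sigma_p^{\ddag}(P)$, where $\sigma_p^{\ddag}$ denotes the relevant $p$-adic height. A probabilistic model in which $v_p\bigl(\sigma_p^{\ddag}(P)\bigr)$ behaves like a geometric random variable of parameter $1/p$ predicts that the number of exceptional primes $p \leq X$ is $O(\log \log X)$, hence of density zero; this is consistent with the numerical data in the table above and with the computations in \cite{CM94, Wut04}. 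In higher rank one would apply the same heuristic to each entry of the height matrix and estimate the valuation of the determinant, with the same density prediction as outcome.

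The main obstacle is converting this heuristic into a theorem. Proving $v_p\bigl(\sigma_p^{\ddag}(P)\bigr) = 1$ for density-one primes is strictly stronger than the still-open non-degeneracy conjecture that $\sigma_p^{\ddag}(P) \neq 0$, so any proof would presumably require either new transcendence-theoretic control on the $p$-adic logarithms attached to $P$ as $p$ varies, or a deformation-theoretic argument extracting the required equidistribution from the variation of Pollack's signed $p$-adic $L$-functions. On the supersingular side one additionally needs Conjecture \ref{supersingularbsd} to apply Theorem \ref{theorem4}; for non-CM $E$ the set of supersingular primes is itself density zero, so the supersingular contribution is automatic and only the ordinary case requires effort, while for CM curves the two reduction types must be handled in parallel.
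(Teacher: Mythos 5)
The statement you were asked to prove is a \emph{Conjecture}, and the paper offers no proof of it: it is stated immediately after Theorems \ref{theorem3} and \ref{theorem4} with the words ``it seems reasonable to make the following conjecture,'' so the paper's implicit justification is precisely the reduction you carry out. Your analysis is the correct one: the conjecture would follow from Theorems \ref{theorem3} and \ref{theorem4} together with a density-zero statement for $\Sigma \cup \Sigma' \cup \Pi^{\ddag}$ among primes of good reduction, and only the $\Pi^{\ddag}$ part is genuinely open (and only in positive rank). Your remarks that the rank-zero case is already unconditional via Theorems \ref{theorem1} and \ref{theorem2}, that $\Sigma'$ is finite and $\Sigma$ is density zero, that Theorem \ref{theorem4} additionally presupposes Conjecture \ref{supersingularbsd}, and that the supersingular contribution is automatic for non-CM $E$ since supersingular primes have density zero, are all correct and in the last instance sharpen what the paper leaves implicit. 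The heuristic in your second paragraph, modeling $v_p\bigl(\sigma_p^{\ddag}(P)\bigr)$ as a geometric random variable of parameter $1/p$ to predict $O(\log\log X)$ exceptional primes, is a reasonable probabilistic motivation but, as you yourself flag, it is not a proof and does not close the gap; no such argument appears in the paper either. So your proposal identifies the same reduction as the paper, correctly isolates the open obstruction, and cannot do better than the paper because the conjecture remains open.
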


\begin{Remark}
Let $E_{/\Q}$ be an elliptic curve with $r_E\geq 1$. 
If $p$ is a prime at which the $p$-adic regulator has valuation equal to $p^{r_E}$, the same argument as Remark~\ref{remark: trivial rank every layer} shows that the rank of the elliptic curve remains unchanged in every layer of the cyclotomic $\Z_p$-extension.
\end{Remark}


\section{Results for a fixed prime and varying elliptic curve}
In this section, we fix a prime $p\geq 5$ and study the variation of Iwasawa invariants as $E$ ranges over all elliptic curves of rank zero with good reduction at $p$.
Recall that any elliptic curve $E$ over $\Q$ admits a unique Weierstrass equation
\begin{equation}\label{weier}
E:y^2 = x^3 + Ax + B
\end{equation}
where $A, B$ are integers and $\gcd(A^3 , B^2)$ is not divisible by any twelfth power. 
Since $p\geq 5$, such an equation is minimal. 
We order elliptic curves by height and expect that similar results shall hold when they are ordered by conductor or discriminant.
Recall that the \textit{height of }$E$ satisfying the minimal equation $\eqref{weier}$ is given by $H(E) := \max\left(\abs{A}^3, \abs{B}^2\right)$.

Let $\mathcal{E}$ be set of isomorphism classes of all elliptic curves over $\Q$.
Let $\mathcal{J}$ be the set of elliptic curves $E$ over $\Q$ satisfying the following two properties 
\begin{enumerate}
 \item $E$ has rank zero,
 \item $E$ has good reduction (ordinary or supersingular) at $p$.
\end{enumerate}
The set $\mathcal{J}$ is the (disjoint) union of two sets $\mathcal{J}^{\ord}$ and $\mathcal{J}^{\op{ss}}$, consisting of rank 0 elliptic curves with ordinary and supersingular reduction at $p$, respectively.
For $X>0$, write $\mathcal{E}(X)$ for the set of isomorphism classes of elliptic curves over $\Q$ of height $<X$.
If $\mathcal{S}$ is a subset of $\mathcal{E}$, set $\mathcal{S}(X)=\mathcal{S}\cap \mathcal{E}(X)$.
It is conjectured that when ordered by height, discriminant or conductor, half of the elliptic curves over $\Q$ have rank $0$ (see for example \cite[Conjecture B]{Gol79} or \cite[p. 15]{KS99}).
If $E\in \mathcal{J}$ has good ordinary reduction at $p$, then the Euler characteristic formula $\eqref{ecf1}$ states that \begin{equation}\label{ecfrdinary}\chi(\Gamma, E[p^{\infty}])\sim \frac{\# \Sha(E/\Q)[p^{\infty}]\times \left(\prod_{l}c_l(E)\right)\times \left(\# \widetilde{E}(\F_p)\right)^2}{\#\left(E(\Q)[p^{\infty}]\right)^2}.\end{equation}
Note that in the above equation, $\#E(\Q)[p^{\infty}]=1$ if $p\geq 11$.
On the other hand, if $E\in \mathcal{J}$ has good supersingular reduction at $p$, then by $\eqref{ecf2}$, we have that
\begin{equation}\label{ecfss}\chi^{\pm}(\Gamma, E[p^{\infty}])\sim \# \Sha(E/\Q)[p^{\infty}]\times \left(\prod_{l}c_l(E)\right).\end{equation}
Denote by $c_l^{(p)}(E)$ the $p$-part of $c_l(E)$, given by $c_l^{(p)}(E):=p^{v_p(c_l(E))}$.
The key observation in this section is that to analyze the variation of the Euler characteristic (and hence $\mu$ and $\lambda$-invariants) of elliptic curves, it suffices to study the average behaviour of the following quantities for fixed $p$ and varying $E\in \mathcal{J}$,
\begin{enumerate}
    \item $s_p(E):=\#\Sha(E/\Q)[p^{\infty}]$, 
    \item $\tau_p(E):=\prod_l c_l^{(p)}(E)$,
    \item $\delta_p(E):=\#\left(\widetilde{E}(\F_p)[p]\right)$.
\end{enumerate}
\begin{Definition}Let $\mathcal{E}_1(X)$, $\mathcal{E}_2(X)$, and $\mathcal{E}_3(X)$ be the subset of elliptic curves in $\mathcal{E}(X)$ for which $p$ divides $s_p(E)$, $\tau_p(E)$ and $\delta_p(E)$ respectively.
\end{Definition}
Note that no assumptions are made on the rank of elliptic curves in $\mathcal{E}(X)$ or $\mathcal{E}_i(X)$.
On the other hand, for elliptic curves $E\in \mathcal{J}$, the rank is zero.

The primary goal is to obtain upper bounds for 
\[\mathfrak{d}_p^{(i)}:=\limsup_{X\rightarrow \infty} \frac{\#\mathcal{E}_i(X)}{\#\mathcal{E}(X)}\]
for $i=2,3$ (with no constraints on the rank of the elliptic curves).

In \cite{Del01}, C. Delauney gave heuristics for the average number of elliptic curves with $s_p(E)\neq 1$.
These heuristics are stated in terms of elliptic curves ordered by conductor.
However, they indicate that $\mathfrak{d}_p^{(1)}$ goes to $0$ as $p\rightarrow \infty$ rather fast.
Since there is still not much known about this particular question, we are unable to make further clarifications about the behaviour of $\mathfrak{d}_p^{(1)}$. 
However, we expect that the analysis of this part of the formula is the most difficult.

Let $\kappa=(a,b)\in \F_p\times \F_p$ be such that the discriminant $\Delta(\kappa):=4a^3+27b^2$ is nonzero. 
The elliptic curve $E_{\kappa} :  y^2=x^3+ax+b$ defined over $\F_p$ is smooth.
Let $d(p)$ be the number of pairs $\kappa=(a,b)\in \F_p\times \F_p$ such that
\begin{enumerate}
 \item $\Delta(\kappa)\neq 0$.
 \item $E_{\kappa} : y^2=x^3+ax+b$ has a point over $\F_p$ of order $p$.
\end{enumerate}
For the primes $p$ in the range $5\leq p<500$, computations on sage show that $d(p)\leq 1$ and $d(p)=1$ for $p\in \{5,7,61\}$.
We remark that $d(p)$ is closely related to the Kronecker class number of $1-4p$ (see \cite[p. 184]{Schoof87}).  
The estimate $\eqref{cremonasadek}$ follows from the method of M. Sadek \cite{sadek17}, or the results of J. Cremona and Sadek, see \cite{CS20}.
\begin{Th}\label{theorem5}
Let $p\geq 5$ be a fixed prime number.
Then
\begin{equation}\label{cremonasadek}
\mathfrak{d}_p^{(2)}\leq \sum_{l\neq p} \frac{(l-1)^2}{l^{p+2}},
\end{equation}
where the sum is taken over prime numbers $l\neq p$,
and
\begin{equation}
\label{to be proven in 4.14}
\mathfrak{d}_p^{(3)}\leq \zeta(10)\cdot\frac{d(p)}{p^2}.
\end{equation}
\end{Th}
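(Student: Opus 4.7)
My plan has two parts, one for each inequality.

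For the first bound, I start from the observation that $p\mid \tau_p(E)$ holds if and only if $p\mid c_l(E)$ for some prime $l\neq p$ of bad reduction of $E$, and so by a union bound
\[
\mathfrak{d}_p^{(2)} \leq \sum_{l\neq p} \limsup_{X\to\infty} \frac{\#\{E\in \mathcal{E}(X) : p\mid c_l(E)\}}{\#\mathcal{E}(X)}.
\]
The task reduces to bounding each summand by $(l-1)^2/l^{p+2}$ using the techniques of Sadek and Cremona-Sadek. The first step is to observe, via Tate's algorithm, that since $p\geq 5$ and Tamagawa numbers at primes of additive reduction are at most $4$, the condition $p\mid c_l(E)$ forces $E$ to have split multiplicative reduction of type $I_{pk}$ at $l$. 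This translates into a congruence condition on the minimal Weierstrass pair $(A,B)$ modulo a sufficiently high power of $l$, namely $l^p$ divides $v_l(\Delta_E)$, combined with the splitting condition cut out modulo $l$. A local Haar measure computation on $\Z_l^2$ then produces a density of order $(l-1)^2/l^{p+2}$ for the locus where $p\mid c_l(E)$. The last step is to pass from the local density to an upper bound on the proportion of elliptic curves of height $<X$ ordered by height, which is precisely the sieving framework of Cremona-Sadek.

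For the second bound, the key reduction is that the condition $p\mid \delta_p(E)$ becomes a mod-$p$ condition on the Weierstrass pair. By the Hasse bound, $\#\widetilde{E}(\F_p) < p+1+2\sqrt{p} < 2p$ when $p\geq 5$, so $\widetilde{E}(\F_p)[p]$ is either trivial or cyclic of order $p$. Consequently, $p\mid \delta_p(E)$ if and only if the reduction $(A\bmod p, B\bmod p)$ lies in the set of $d(p)$ pairs $\kappa=(a,b)\in \F_p^2$ with $\Delta(\kappa)\neq 0$ and $E_\kappa(\F_p)$ containing a point of order $p$. Setting $\mathcal{P}(X):=\{(A,B)\in\Z^2 : \max(|A|^3,|B|^2)<X,\ 4A^3+27B^2\neq 0\}$, a direct box count gives $\#\mathcal{P}(X)\sim 4X^{5/6}$, while removing non-minimal pairs by inclusion-exclusion over twelfth powers dividing $\gcd(A^3,B^2)$ yields $\#\mathcal{E}(X)\sim 4X^{5/6}/\zeta(10)$. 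Since each isomorphism class in $\mathcal{E}_3(X)$ arises from at least one pair in $\mathcal{P}(X)$ satisfying the mod-$p$ condition, a straightforward count gives
\[
\#\mathcal{E}_3(X) \leq \frac{d(p)}{p^2}\cdot 4X^{5/6} + o(X^{5/6}),
\]
and dividing by the asymptotic for $\#\mathcal{E}(X)$ produces $\mathfrak{d}_p^{(3)}\leq \zeta(10)\cdot d(p)/p^2$.

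I expect the main obstacle to lie in the first bound, specifically in making the local Tamagawa computation at $l$ precise enough to extract the constants $(l-1)^2$ and $l^{p+2}$ and then invoking the Cremona-Sadek sieve in a form sufficiently uniform in $l$ that the resulting sum over $l$ genuinely controls the global density of height-$<X$ curves. The second bound, by contrast, is a clean accounting argument once the Hasse-bound reduction of $\delta_p(E)$ to a mod-$p$ condition is in place; the only mild subtlety is that forgetting the minimality hypothesis on $(A,B)$ yields only an upper bound, but this is exactly what is needed for a bound on $\mathfrak{d}_p^{(3)}$.
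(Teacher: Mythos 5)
Your plan matches the paper's approach for both inequalities: for \eqref{cremonasadek} the paper likewise reduces to Sadek's sieve count of Weierstrass pairs $(A,B)$ in a box with $l\nmid A$, $l\nmid B$ and a prescribed power of $l$ dividing $4A^3+27B^2$, divided by Brumer's count of $\#\mathcal{E}(X)$ (Lemma~\ref{Brumer} and Theorem~\ref{the dp2 estimate}); and for \eqref{to be proven in 4.14} it likewise reduces the anomalous condition to a residue-class count modulo $p$ on $(A,B)$ against the same asymptotic (Lemma~\ref{lastlemma} and Theorem~\ref{last result}), which is exactly your $\mathcal{P}(X)$ argument.

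Two remarks worth recording. First, the phrase ``$l^p$ divides $v_l(\Delta_E)$'' is garbled and should read ``$p$ divides $v_l(\Delta_E)$'' (or ``$l^p \mid \Delta_E$'' together with the divisibility-by-$p$ condition), since $v_l(\Delta_E)$ is an ordinary integer. Second, and more substantively, both you and the paper pass a little quickly over the fact that for $p\geq 5$ the condition $p\mid c_l(E)$ forces split multiplicative type $I_{kp}$ for \emph{some} $k\geq 1$, not just $k=1$; the paper's Theorem~\ref{the dp2 estimate} bounds only the locus $l^p\edv 4A^3+27B^2$, i.e.\ exact type $I_p$, which is not a superset of the locus $p\mid c_l(E)$. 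The relevant local density at $l$ is $\tfrac12\sum_{k\geq 1}(l-1)^2/l^{kp+2}=\tfrac{(l-1)^2}{2l^2(l^p-1)}$, and this is indeed $\leq (l-1)^2/l^{p+2}$ (equivalently $l^p\geq 2$), so the stated bound holds — but this comparison should be made explicit rather than asserting a density ``of order $(l-1)^2/l^{p+2}$.'' Your closing caveat about uniformity in $l$ (to justify interchanging the $\limsup$ with the infinite sum over $l$) is also a genuine point that the paper does not address.
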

We provide a proof of \eqref{cremonasadek} in Theorem \ref{the dp2 estimate} and of \eqref{to be proven in 4.14} in Theorem \ref{last result}.
Note that $\zeta(10)=\frac{\pi^{10}}{93555}$ is approximately equal to $1.001$.
This quantity arises since the proportion of Weierstrass equations ordered by height which are minimal is $\frac{1}{\zeta(10)}$ (see \cite{CS20}).
To avoid confusion, we state the results for good ordinary and good supersingular elliptic curves separately.
First, we state the result for elliptic curves with good ordinary reduction at $p$.

\begin{Th}\label{theorem6}
Let $p\geq 5$ be a fixed prime number.
Let $\mathcal{Z}^{\ord}$ denote the set of rank 0 elliptic curves $E$ with good ordinary reduction at $p$ for which the following equivalent conditions are satisfied
\begin{enumerate}
 \item $\chi(\Gamma, E[p^{\infty}])=1$,
 \item $\Selp=0$.
\end{enumerate}
Then, 
\[\limsup_{X\rightarrow \infty} \frac{\#\mathcal{Z}^{\ord}(X)}{\#\mathcal{E}(X)}
\geq  \limsup_{X\rightarrow \infty} \frac{\#\mathcal{J}^{\ord}(X)}{\#\mathcal{E}(X)}-\mathfrak{d}_p^{(1)}-\sum_{l\neq p} \frac{(l-1)^2}{l^{p+2}}-\zeta(10)\cdot \frac{d(p)}{p^2}.\]
\end{Th}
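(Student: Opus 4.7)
The plan is to combine the Euler characteristic formula \eqref{ecfrdinary} with the equivalences recorded in Corollary~\ref{corollary35} and the counting estimates packaged in Theorem~\ref{theorem5}. The key observation is that for any rank $0$ elliptic curve $E$ with good ordinary reduction at $p$, the value $\chi(\Gamma, E[p^\infty])$ is a power of $p$ (since both $H^0(\Gamma, E[p^\infty])$ and $H^1(\Gamma, E[p^\infty])$ are $p$-primary), so the condition $\chi(\Gamma, E[p^\infty]) = 1$ is equivalent to the vanishing of the $p$-adic valuation of the right-hand side of \eqref{ecfrdinary}.

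The first step is to establish the set-theoretic containment
\[
\mathcal{J}^{\ord}(X) \setminus \bigl(\mathcal{E}_1(X) \cup \mathcal{E}_2(X) \cup \mathcal{E}_3(X)\bigr) \subseteq \mathcal{Z}^{\ord}(X).
\]
If $E$ lies in the left-hand side, then $p \nmid s_p(E)$, $p \nmid \tau_p(E)$, and $p \nmid \delta_p(E)$. Because the reduction map $E(\Q)[p] \hookrightarrow \widetilde{E}(\F_p)[p]$ is injective for $p$ odd with good reduction, the condition $p \nmid \delta_p(E)$ forces $E(\Q)[p^\infty] = 0$, so the denominator in \eqref{ecfrdinary} equals $1$. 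The numerator is then coprime to $p$, so $v_p\bigl(\chi(\Gamma, E[p^\infty])\bigr) = 0$, and by the observation above $\chi(\Gamma, E[p^\infty]) = 1$. Corollary~\ref{corollary35} then places $E$ in $\mathcal{Z}^{\ord}$.

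Taking cardinalities and dividing by $\#\mathcal{E}(X)$, the containment yields
\[
\frac{\#\mathcal{Z}^{\ord}(X)}{\#\mathcal{E}(X)} \;\geq\; \frac{\#\mathcal{J}^{\ord}(X)}{\#\mathcal{E}(X)} - \sum_{i=1}^{3} \frac{\#\mathcal{E}_i(X)}{\#\mathcal{E}(X)}.
\]
Passing to $\limsup$ and applying the standard inequality $\limsup_X(a_X - b_X) \geq \limsup_X a_X - \limsup_X b_X$ (valid because all sequences in question take values in $[0,1]$) iteratively produces
\[
\limsup_{X \to \infty} \frac{\#\mathcal{Z}^{\ord}(X)}{\#\mathcal{E}(X)} \;\geq\; \limsup_{X \to \infty} \frac{\#\mathcal{J}^{\ord}(X)}{\#\mathcal{E}(X)} - \mathfrak{d}_p^{(1)} - \mathfrak{d}_p^{(2)} - \mathfrak{d}_p^{(3)}.
\]
Substituting the bounds $\mathfrak{d}_p^{(2)} \leq \sum_{l \neq p} (l-1)^2/l^{p+2}$ and $\mathfrak{d}_p^{(3)} \leq \zeta(10) \cdot d(p)/p^2$ from Theorem~\ref{theorem5} gives the desired inequality.

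In this sense the proof is essentially bookkeeping: the substantive content resides in Corollary~\ref{corollary35} (identifying $\chi = 1$ with triviality of the Selmer group for rank $0$ curves) and in Theorem~\ref{theorem5} (whose quantitative estimates on $\#\mathcal{E}_i(X)$ are carried out later in the paper). There is no genuine obstacle here; the only mildly delicate point is the upgrade from $\chi \sim 1$ to the equality $\chi = 1$, which hinges on $\chi$ being a $p$-power integer.
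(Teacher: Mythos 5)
Your proposal is correct and follows the same route as the paper's (very terse) proof: Corollary \ref{corollary35} identifies the two conditions defining $\mathcal{Z}^{\ord}$, the Euler characteristic formula \eqref{ecfrdinary} reduces $\chi=1$ to $p\nmid s_p(E)\tau_p(E)\delta_p(E)$, and Theorem \ref{theorem5} supplies the density bounds. You have usefully spelled out the set inclusion $\mathcal{J}^{\ord}(X)\setminus\bigl(\mathcal{E}_1(X)\cup\mathcal{E}_2(X)\cup\mathcal{E}_3(X)\bigr)\subseteq\mathcal{Z}^{\ord}(X)$, the injectivity of reduction forcing $E(\Q)[p^\infty]=0$, and the $\limsup$ bookkeeping, all of which the paper compresses into a single line.
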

\begin{proof}
It follows from Corollary $\ref{corollary35}$ that $\chi(\Gamma, E[p^{\infty}])=1$ and $\Selp$ are equivalent.
By the Euler characteristic formula $\eqref{ecfrdinary}$,
\[\limsup_{X\rightarrow \infty} \frac{\#\mathcal{Z}^{\ord}(X)}{\#\mathcal{E}(X)}
\geq \limsup_{X\rightarrow \infty} \frac{\#\mathcal{J}^{\ord}(X)}{\#\mathcal{E}(X)}- \mathfrak{d}_p^{(1)}-\mathfrak{d}_p^{(2)}-\mathfrak{d}_p^{(3)}.\]
The result follows from Theorem $\ref{theorem5}$.
\end{proof}

Next, we prove an analogous result in the case when $E$ varies over elliptic curves with good supersingular reduction at $p$.
\begin{Th}\label{supersingulartheorem}
Let $p\geq 5$ be a fixed prime number.
Let $\mathcal{Z}^{\op{ss}}$ be the rank 0 elliptic curves $E$ with good supersingular reduction at $p$, for which the following equivalent conditions are satisfied
\begin{enumerate}
 \item $\chi^{\pm}(\Gamma, E[p^{\infty}])=1$
 \item $\Selpm=0$.
\end{enumerate}
Then,
\[\limsup_{X\rightarrow \infty} \frac{\#\mathcal{Z}^{\op{ss}}(X)}{\#\mathcal{E}(X)}
\geq \limsup_{X\rightarrow \infty} \frac{\#\mathcal{J}^{\op{ss}}(X)}{\#\mathcal{E}(X)}-\mathfrak{d}_p^{(1)}-\sum_{l\neq p} \frac{(l-1)^2}{l^{p+2}}.\]
\end{Th}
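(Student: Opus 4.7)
The plan is to mirror the proof of Theorem \ref{theorem6}, with one structural simplification: the supersingular Euler characteristic formula \eqref{ecfss} does \emph{not} contain a $\#\widetilde{E}(\F_p)$ factor, so the quantity $\delta_p(E)$ and the set $\mathcal{E}_3(X)$ are irrelevant here, and the $\zeta(10)\cdot d(p)/p^2$ term of Theorem \ref{theorem6} drops out.

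First, I would dispose of the equivalence of conditions (1) and (2) in the statement by quoting Corollary \ref{corollary35}: for any rank zero elliptic curve with good supersingular reduction at $p$ and any sign $\ddag \in \{+,-\}$, we have $\chi^{\ddag}(\Gamma,E[p^\infty]) = 1$ if and only if $\Sel_{p^\infty}^{\ddag}(E/\Q^{\cyc}) = 0$. Note that Corollary \ref{corollary35} applies because $E\in\mathcal{J}^{\op{ss}}$ has rank zero, so the hypotheses of Lemma \ref{lemma31} are satisfied and $\chi^{\ddag}(\Gamma,E[p^\infty])$ is well-defined.

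Next, I would use the supersingular Euler characteristic formula \eqref{ecfss}:
\[
\chi^{\pm}(\Gamma, E[p^{\infty}]) \;\sim\; s_p(E)\cdot \tau_p(E),
\]
so for $E \in \mathcal{J}^{\op{ss}}(X)$, we have $\chi^{\pm} = 1$ precisely when $p \nmid s_p(E)$ and $p \nmid \tau_p(E)$. Hence every $E \in \mathcal{J}^{\op{ss}}(X) \setminus \mathcal{Z}^{\op{ss}}(X)$ lies in $\mathcal{E}_1(X) \cup \mathcal{E}_2(X)$, so
\[
\#\mathcal{J}^{\op{ss}}(X) \;\leq\; \#\mathcal{Z}^{\op{ss}}(X) + \#\mathcal{E}_1(X) + \#\mathcal{E}_2(X).
\]

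Finally, I would divide through by $\#\mathcal{E}(X)$ and apply the subadditivity of $\limsup$ to the right-hand side (which is the careful step, since one must work with the additive form rather than naively subtract $\limsup$s) to obtain
\[
\limsup_{X\to\infty}\frac{\#\mathcal{J}^{\op{ss}}(X)}{\#\mathcal{E}(X)} \;\leq\; \limsup_{X\to\infty}\frac{\#\mathcal{Z}^{\op{ss}}(X)}{\#\mathcal{E}(X)} + \mathfrak{d}_p^{(1)} + \mathfrak{d}_p^{(2)},
\]
and then invoke the bound $\mathfrak{d}_p^{(2)}\leq \sum_{l\neq p}(l-1)^2/l^{p+2}$ from Theorem \ref{theorem5}. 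There is no substantive obstacle here beyond the inputs already established; the proof is essentially a bookkeeping exercise over Theorem \ref{theorem6}, with the Lei--Sujatha Euler characteristic formula \eqref{ecfss} doing the work of streamlining the supersingular side.
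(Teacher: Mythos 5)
Your proposal is correct and follows the same route the paper takes: the paper's own proof of this theorem is a one-line reference to Corollary~\ref{corollary35}, Theorem~\ref{theorem5}, and the Euler characteristic formula~\eqref{ecfss}, exactly the ingredients you assemble. The only difference is that you spell out the bookkeeping (the inclusion $\mathcal{J}^{\op{ss}}(X)\setminus\mathcal{Z}^{\op{ss}}(X)\subseteq\mathcal{E}_1(X)\cup\mathcal{E}_2(X)$ and the correct use of $\limsup$ subadditivity), which the paper leaves implicit.
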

\begin{proof}
The proof is identical to that of Theorem $\ref{theorem6}$. 
It is a direct consequence of Corollary $\ref{corollary35}$, Theorem $\ref{theorem5}$, and the Euler characteristic formula $\eqref{ecfss}$.
\end{proof}
We prove a result which applies for all elliptic curves with good reduction.
\begin{Th}
\label{main result rank 0 good reduction}
Let $p\geq 5$ be a fixed prime number.
Let $\mathcal{Z}$ be the set of rank 0 elliptic curves $E$ with good reduction at $p$, for which $\Sel_{p^\infty}^{\ddag}(E/\Q^{\cyc})=0$.
Then,
\[\limsup_{X\rightarrow \infty} \frac{\#\mathcal{Z}(X)}{\#\mathcal{E}(X)}
\geq \limsup_{X\rightarrow \infty} \frac{\#\mathcal{J}(X)}{\#\mathcal{E}(X)}-\mathfrak{d}_p^{(1)}-\sum_{l\neq p} \frac{(l-1)^2}{l^{p+2}}-\zeta(10)\cdot \frac{d(p)}{p^2}.\]
\end{Th}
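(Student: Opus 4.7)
The plan is to fuse the arguments behind Theorem~\ref{theorem6} and Theorem~\ref{supersingulartheorem} into a single estimate that retains only one copy of each error term $\mathfrak{d}_p^{(i)}$, rather than summing the two inequalities (which would double-count $\mathfrak{d}_p^{(1)}$ and $\mathfrak{d}_p^{(2)}$). The key conceptual observation is that the bound on the exceptional set can be produced uniformly in the reduction type, so the ordinary and supersingular cases can be combined at the level of elliptic curves rather than at the level of densities.

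First, I would partition $\mathcal{J}(X) = \mathcal{J}^{\op{ord}}(X) \sqcup \mathcal{J}^{\op{ss}}(X)$ and $\mathcal{Z}(X) = \mathcal{Z}^{\op{ord}}(X) \sqcup \mathcal{Z}^{\op{ss}}(X)$, and examine the complement $\mathcal{J}(X) \setminus \mathcal{Z}(X)$ directly. For each such $E$, the Selmer group $\Sel_{p^\infty}^{\ddag}(E/\Q^{\cyc})$ is nonzero, so by Corollary~\ref{corollary35} the classical Euler characteristic $\chi^{\ddag}(\Gamma, E[p^\infty])$ is not equal to $1$. Applying the Euler characteristic formula~\eqref{ecfrdinary} when $E \in \mathcal{J}^{\op{ord}}$ and~\eqref{ecfss} when $E \in \mathcal{J}^{\op{ss}}$, this forces $p$ to divide at least one of $s_p(E)$, $\tau_p(E)$, or, only in the ordinary case, $\delta_p(E)$. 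In either reduction regime, we obtain the uniform containment
\[
\mathcal{J}(X) \setminus \mathcal{Z}(X) \;\subseteq\; \mathcal{E}_1(X) \cup \mathcal{E}_2(X) \cup \mathcal{E}_3(X),
\]
and consequently
\[
\#\mathcal{Z}(X) \;\geq\; \#\mathcal{J}(X) \;-\; \#\mathcal{E}_1(X) \;-\; \#\mathcal{E}_2(X) \;-\; \#\mathcal{E}_3(X).
\]

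Dividing by $\#\mathcal{E}(X)$, rearranging, taking $\limsup$, and using the subadditivity inequality $\limsup(a_X + b_X) \leq \limsup a_X + \limsup b_X$ to absorb the three error sequences on the right-hand side, I would deduce
\[
\limsup_{X\to\infty} \frac{\#\mathcal{Z}(X)}{\#\mathcal{E}(X)} \;\geq\; \limsup_{X\to\infty} \frac{\#\mathcal{J}(X)}{\#\mathcal{E}(X)} \;-\; \mathfrak{d}_p^{(1)} \;-\; \mathfrak{d}_p^{(2)} \;-\; \mathfrak{d}_p^{(3)}.
\]
The quantitative bounds on $\mathfrak{d}_p^{(2)}$ and $\mathfrak{d}_p^{(3)}$ supplied by Theorem~\ref{theorem5} then yield the inequality in the statement.

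The substantive content is imported from Corollary~\ref{corollary35} (which translates vanishing of the Selmer group into the divisibility statement about the Euler characteristic), the Euler characteristic formulas~\eqref{ecfrdinary} and~\eqref{ecfss} (which identify the three obstructing factors), and Theorem~\ref{theorem5} (which bounds the densities $\mathfrak{d}_p^{(2)}$ and $\mathfrak{d}_p^{(3)}$). The only subtlety particular to this combined statement is the bookkeeping needed to prevent double-counting when straddling the ordinary/supersingular split, together with the standard subadditivity step for $\limsup$; I do not expect any essentially new analytic obstacle.
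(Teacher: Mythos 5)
Your proposal is correct and follows essentially the same strategy as the paper: both proofs define a single exceptional set of curves (the paper's $\mathcal{Y}$ being exactly those $E$ with $s_p(E)\tau_p(E)\delta_p(E)\neq 1$, while you pass directly to $\mathcal{J}\setminus\mathcal{Z}\subseteq\mathcal{E}_1\cup\mathcal{E}_2\cup\mathcal{E}_3$), observe via Corollary~\ref{corollary35} and the Euler characteristic formulas~\eqref{ecfrdinary} and~\eqref{ecfss} that this set captures all rank $0$ curves of good reduction with nonzero Selmer group, and then apply Theorem~\ref{theorem5} together with subadditivity of $\limsup$. Your write-up is in fact somewhat more explicit than the paper's about why the ordinary and supersingular regimes are absorbed into one exceptional set with a single copy of each $\mathfrak{d}_p^{(i)}$, but the underlying argument is identical.
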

\begin{proof}
Let $\mathcal{Y}\subset \mathcal{E}$ consist of the elliptic curves $E$ for which $s_p(E)\tau_p(E)\delta_p(E)\neq 1$. It follows from the Euler characteristic formulas $\eqref{ecfrdinary}$ and $\eqref{ecfss}$ that $\mathcal{Z}$ is contained in $\mathcal{E}\setminus \mathcal{Y}$.
By Theorem $\ref{theorem6}$ that 
\[\limsup_{X\rightarrow \infty} \frac{\#\mathcal{Y}(X)}{\#\mathcal{E}(X)}\leq \mathfrak{d}_p^{(1)}+\sum_{l\neq p} \frac{(l-1)^2}{l^{p+2}}+\zeta(10)\cdot \frac{d(p)}{p^2}\] and the result follows.
\end{proof}

\begin{Remark}
On average, the proportion of elliptic curves over $\Z_p$ with good reduction at $p$ (ordered by height) is $(1-\frac{1}{p})$, see \cite{CS20}.
Also, it is expected that $1/2$ the elliptic curves have rank $1$ when ordered by height.
Therefore, it is reasonable to expect that
\[\limsup_{X\rightarrow \infty}\frac{\#\mathcal{J}(X)}{\#\mathcal{E}(X)}=\frac{1}{2}\left(1-\frac{1}{p}\right).\]
\end{Remark}
Heuristics of Delauney suggest that $\mathfrak{d}_p^{(1)}$ should approach zero quite rapidly as $p\rightarrow \infty$.
The result indicates that the proportion of elliptic curves for which the Selmer group is zero is $>0$ and the proportion approaches $1/2$ as $p\rightarrow \infty$.
\par We are led to make the following conjecture.
\begin{Conjecture}\label{lastconj}
Let $p$ be a fixed prime. 
Denote by $\mathcal{J}_p$ the set of rank 0 elliptic curves with good reduction at $p$, and by $\mathcal{Z}_p$ the subset of elliptic curves for which $\Sel_{p^\infty}^{\ddag}(E/\Q^{\cyc})=0$.
Then,
\[\liminf_{p\rightarrow \infty} \left(\limsup_{X\rightarrow \infty} \frac{\mathcal{Z}_p(X)}{\mathcal{J}_p(X)} \right)=1.\]
\end{Conjecture}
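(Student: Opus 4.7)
The plan is to deduce the conjecture from Theorem $\ref{main result rank 0 good reduction}$ by dividing through by the density of $\mathcal{J}_p$ inside $\mathcal{E}$, and then analysing the behaviour of each error term as $p\to\infty$. Set $\alpha_p := \limsup_{X\to\infty} \#\mathcal{J}_p(X)/\#\mathcal{E}(X)$; the remark following Theorem $\ref{main result rank 0 good reduction}$ predicts $\alpha_p = \tfrac{1}{2}(1-1/p)$, and in particular $\alpha_p \to 1/2$. Dividing the inequality of that theorem by $\alpha_p$ yields
\[
\limsup_{X\to\infty}\frac{\#\mathcal{Z}_p(X)}{\#\mathcal{J}_p(X)} \;\geq\; 1 - \frac{1}{\alpha_p}\!\left(\mathfrak{d}_p^{(1)} + \sum_{l\neq p}\frac{(l-1)^2}{l^{p+2}} + \zeta(10)\cdot\frac{d(p)}{p^2}\right),
\]
while the opposite inequality $\limsup_X \#\mathcal{Z}_p(X)/\#\mathcal{J}_p(X)\leq 1$ is automatic from the inclusion $\mathcal{Z}_p\subseteq \mathcal{J}_p$. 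Thus the conjecture reduces to showing that each of the three error terms is $o(\alpha_p)$, equivalently $o(1)$, as $p\to\infty$.

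Two of the three terms are tractable. The tail sum is dominated by $\sum_{l\text{ prime}} l^{-p}$, which decays geometrically in $p$ and is easily seen to be $O(2^{-p})$. For $\zeta(10)\,d(p)/p^2$, I would bound $d(p)$ as follows: for $p\geq 5$, the Hasse bound forces any $(a,b)\in\mathbb{F}_p^{\,2}$ counted by $d(p)$ to satisfy $\#E_{a,b}(\mathbb{F}_p)=p$, i.e.\ trace $a_p=1$. Combining Deuring's mass formula for curves of fixed trace with the classical estimate $H(4p-1)\ll p^{1/2+\varepsilon}$ for the Hurwitz class number gives $d(p)=O(p^{1/2+\varepsilon})$, so this term is $O(p^{-3/2+\varepsilon})=o(1)$. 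A small auxiliary step is to verify that the $\limsup$ in the definition of $\alpha_p$ is actually a limit, or at least bounded below by a positive constant independent of $p$ for $p$ large; this is standard given the sieve-of-Bhargava--Shankar type estimates controlling the proportion of height-ordered curves with good reduction at $p$.

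The main obstacle is the Delaunay term $\mathfrak{d}_p^{(1)}$, which measures the upper density, ordered by height, of elliptic curves $E/\mathbb{Q}$ for which $p\mid \#\Sha(E/\mathbb{Q})$. Unlike the other two error terms, this quantity encodes genuinely global arithmetic information and is inaccessible from purely local considerations. The heuristics of Delaunay predict that $\mathfrak{d}_p^{(1)}$ decays polynomially in $p$, which would be more than enough to conclude. In principle one would combine the Bhargava--Kane--Lenstra--Poonen--Rains distributional results on $\Sha[n]$ (currently available only for small $n$) with uniformity in $p$ to extract the required $o(1)$ bound; however, no such uniform statement is presently known, and this is the single arithmetic input that separates the conjecture from a theorem. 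Granting \emph{any} bound of the form $\mathfrak{d}_p^{(1)}=o(1)$ as $p\to\infty$, the three-step argument above immediately yields the conjectured equality.
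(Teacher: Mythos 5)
This is a \emph{conjecture} in the paper, and the paper offers no proof of it; it is motivated, immediately after Theorem \ref{main result rank 0 good reduction}, by the expectation that $\limsup_X \#\mathcal{J}(X)/\#\mathcal{E}(X) = \tfrac{1}{2}(1-1/p)$ (assuming the rank distribution heuristic and the good-reduction density $1-1/p$) together with Delaunay's heuristics for $\mathfrak{d}_p^{(1)}$. Your analysis follows exactly this motivation: divide the inequality from Theorem \ref{main result rank 0 good reduction} by $\alpha_p$, show the two tractable error terms are $o(1)$, and isolate $\mathfrak{d}_p^{(1)}$ as the genuinely open input. You are right that this is not a proof and right about where the gap is; your assessment is consistent with how the authors present the statement.

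Two corrections of detail. First, the division by $\alpha_p$ does go through, but it deserves a sentence: from $\limsup_X \#\mathcal{Z}_p(X)/\#\mathcal{E}(X)\geq \alpha_p-\epsilon_p$, $\limsup_X \#\mathcal{J}_p(X)/\#\mathcal{E}(X)=\alpha_p$, and $\mathcal{Z}_p\subseteq\mathcal{J}_p$, one takes a subsequence realizing the first limsup and passes to a further subsequence along which $\#\mathcal{J}_p/\#\mathcal{E}$ converges (necessarily to a limit in $[\alpha_p-\epsilon_p,\alpha_p]$), which yields $\limsup_X \#\mathcal{Z}_p(X)/\#\mathcal{J}_p(X)\geq 1-\epsilon_p/\alpha_p$ provided $\alpha_p>0$; limsups of quotients do not divide in general, so this cannot simply be asserted. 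Second, your bound $d(p)=O(p^{1/2+\varepsilon})$ undercounts by a factor of $p$: the Hurwitz class number $H(1-4p)\ll p^{1/2+\varepsilon}$ controls the number of $\overline{\mathbb{F}_p}$-isomorphism classes with $a_p=1$, but each class (generically) contributes on the order of $(p-1)/2$ pairs $(a,b)\in\mathbb{F}_p^2$ via $(a,b)\mapsto(u^4a,u^6b)$, so the honest estimate is $d(p)=O(p^{3/2+\varepsilon})$, giving $d(p)/p^2=O(p^{-1/2+\varepsilon})=o(1)$ — still sufficient, but the exponent you quote is wrong. (Incidentally this same multiplicity shows the paper's reported sage values $d(p)\leq 1$ for $p<500$ cannot be correct: already for $p=5$ both $(3,1)$ and $(3,4)$ give curves with five $\mathbb{F}_5$-points.) The essential content of your write-up — that everything is under control except $\mathfrak{d}_p^{(1)}$, whose decay in $p$ is a heuristic of Delaunay and not a theorem — is accurate and is precisely why the statement is a conjecture rather than a corollary.
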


\begin{Remark}
In the rank one case, such an analysis is difficult.
This is because of the term arising from the $p$-adic regulator in the formula for the truncated Euler characteristic.
At the time of writing, the authors are not aware of any results or heuristics for the average behaviour of the $p$-adic valuation of $R_p(E/\Q)$ as $E$ ranges over all elliptic curves of rank $1$ with good ordinary reduction at $p$.
\end{Remark}

Theorem $\ref{theorem5}$ is proved in the remainder of the section.
\subsection{Average results on Tamagawa numbers}
Let $p\geq 5$ be a fixed prime, and $l$ be a prime different from $p$.
In this section, we estimate the proportion of elliptic curves $E/\Q$ up to height $X$ with Kodaira type $I_{p}$ at $l$. 
These estimates are well known, but we include them for the sake of completeness, see \cite{sadek17,CS20}.
Recall that when the Kodaira symbol at the prime $l$ is $I_{p}$, the Tamagawa number $c_l$ is divisible by $p$ \cite[p. 448]{Sil09}.
Let $\cE(X)$ be the set of isomorphism classes of all elliptic curves over $\Q$ with height $\leq X$.
This is in one-to-one correspondence with the set
\[
\begin{Bmatrix}
 & \abs{A}\leq \sqrt[3]{X}, \ \abs{B}\leq \sqrt{X}\\
(A,B) \in \Z \times \Z: & \quad 4A^3 +27B^2 \neq 0 \\
& \textrm{for all primes } q \textrm{ if } q^4 |A, \textrm{then } q^6\nmid B 
\end{Bmatrix}.
\]

\begin{Lemma}[A. Brumer]
\label{Brumer}
With notation as above, 
\[
\# \cE(X) = \frac{4X^{5/6}}{\zeta(10)} + O\left(\sqrt{X}\right).
\]
\end{Lemma}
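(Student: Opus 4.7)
The plan is to parameterize $\mathcal{E}(X)$ by pairs $(A,B) \in \mathbb{Z}^2$ in an explicit box and carry out a Möbius-sieve over primes to implement the minimality constraint. Concretely, the condition that $\gcd(A^3,B^2)$ is not divisible by any twelfth power is equivalent to requiring that for every prime $q$, one does not have both $q^4 \mid A$ and $q^6 \mid B$. So $\#\mathcal{E}(X)$ equals the number of $(A,B)\in \mathbb{Z}^2$ with $|A|\le X^{1/3}$, $|B|\le X^{1/2}$, $4A^3+27B^2\neq 0$, that are minimal in this sense.

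First I would count the raw box: the number of integer pairs with $|A|\le X^{1/3}$, $|B|\le X^{1/2}$ is $(2\lfloor X^{1/3}\rfloor+1)(2\lfloor X^{1/2}\rfloor+1) = 4X^{5/6}+O(X^{1/2})$. Next, for each squarefree $d\ge 1$, let $N(d,X)$ denote the number of $(A,B)$ in the box with $d^4\mid A$ and $d^6\mid B$. Since these divisibilities for distinct primes combine multiplicatively on squarefree $d$, inclusion–exclusion gives
\[
\#\mathcal{E}(X) \;=\; \sum_{d\ge 1}\mu(d)\,N(d,X) \;-\; \#\bigl\{(A,B):4A^3+27B^2=0,\ |A|\le X^{1/3},\ |B|\le X^{1/2},\ \text{minimal}\bigr\}.
\]
For $d\le X^{1/12}$, one has $N(d,X)= 4X^{5/6}/d^{10} + O(X^{1/2}/d^6) + O(X^{1/3}/d^4) + O(1)$, while for $d>X^{1/12}$ the only contributing pair is $(0,0)$, which is ruled out by the singular discriminant or by minimality in the limit.

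Then I would extract the main term: $\sum_{d\ge 1}\mu(d)/d^{10}=1/\zeta(10)$, so the $4X^{5/6}/d^{10}$ contributions give $4X^{5/6}/\zeta(10)$ plus a tail $O\bigl(X^{5/6}\sum_{d>X^{1/12}}d^{-10}\bigr)=O(X^{5/6}\cdot X^{-9/12}) = O(X^{1/12})$. The error terms are bounded by $\sum_{d\le X^{1/12}}\mu(d)^2 X^{1/2}/d^6 = O(X^{1/2})$, $\sum_{d\le X^{1/12}}\mu(d)^2 X^{1/3}/d^4=O(X^{1/3})$, and $\sum_{d\le X^{1/12}} 1 = O(X^{1/12})$, all dominated by $O(X^{1/2})$. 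Finally, pairs with $4A^3+27B^2=0$ are parametrized by $A=-3t^2,\ B=2t^3$ for $t\in\mathbb{Z}$ with $|t|\le X^{1/6}$, contributing only $O(X^{1/6})$. Combining everything yields $\#\mathcal{E}(X)=4X^{5/6}/\zeta(10)+O(X^{1/2})$.

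The main obstacle is a bookkeeping one rather than a conceptual one: the Möbius sum is infinite, but only finitely many $d$ give nontrivial $N(d,X)$ once one excludes $(0,0)$, so one must carefully separate the regime $d\le X^{1/12}$ (where the floor approximations yield the stated error) from $d>X^{1/12}$ (whose only ``contribution'' is the singular pair $(0,0)$). Once this truncation is handled cleanly, and the $O(X^{1/2}/d^6)$ error term — which is what actually dictates the final $O(\sqrt{X})$ — is summed, the result follows.
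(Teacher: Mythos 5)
Your proof is correct and self-contained; the paper itself offers no proof at all, merely citing Brumer (\cite[Lemma 4.3]{Bru92}), and your M\"obius sieve is the standard argument one would expect to find there. The main term comes exactly as you say: the raw box contributes $4X^{5/6}+O(\sqrt X)$ pairs, the sieve over squarefree $d$ inserting the condition $d^4\mid A$, $d^6\mid B$ produces the factor $\sum_{d\ge 1}\mu(d)/d^{10}=1/\zeta(10)$, and the error is dominated by $\sum_{d\le X^{1/12}} O(X^{1/2}/d^6)=O(\sqrt X)$. The singular locus $4A^3+27B^2=0$ is correctly parametrized by $A=-3t^2$, $B=2t^3$ and gives only $O(X^{1/6})$ pairs.

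One point worth tightening in the write-up (you flag it but somewhat loosely): as written, $\sum_{d\ge1}\mu(d)N(d,X)$ is not absolutely convergent, because $(0,0)$ lies in every residue class and hence contributes $\mu(d)$ for every $d$. The clean fix is to run the inclusion--exclusion over pairs $(A,B)\neq(0,0)$ from the outset; then the sum is genuinely finite, supported on $d\le X^{1/12}$, and the difference between $N(d,X)$ and $N(d,X)-1$ contributes only $\sum_{d\le X^{1/12}}\mu(d)=O(X^{1/12})$, which is harmless. Phrasing it as ``$(0,0)$ is ruled out by the singular discriminant or by minimality in the limit'' conflates two different exclusions and obscures that the infinite sum is not well-defined as stated. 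With that bookkeeping made explicit, the argument is complete and gives exactly the stated error term $O(\sqrt X)$.
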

\begin{proof}
See {\cite[Lemma 4.3]{Bru92}}.
\end{proof}

Consider the set $\cE^{I_{p}}_l(X)$, i.e. the set of elliptic curves over $\Q$ with bad reduction at $l$, height $\leq X$, and Kodaira type $I_{p}$.
The Kodaira symbol forces the bad reduction to be of multiplicative type.
It follows from Tate's algorithm that this set is in one-to-one correspondence with
\begin{equation}
\label{correspondence}
\begin{Bmatrix}
 & \abs{A}\leq \sqrt[3]{X}, \ \abs{B}\leq \sqrt{X}\\
(A,B) \in \Z \times \Z: & l\nmid A, \ l\nmid B, \ l^{p}\edv 4A^3 + 27B^3 \\
& (A,B)\neq (0,0)\in \Z/q^4 \times \Z/q^6 \textrm{ for any prime }q 
\end{Bmatrix}.
\end{equation}
We include both upper and lower bounds, however, we only apply upper bounds in our analysis.
The following calculations have been done in the preprint \cite[Lemma 4.1]{sadek17}. We clarify the arguments and include them here for completeness. 
\begin{Lemma}[Sadek]
Let $X>0$ and $l_i$ be the $i$-th prime.
Let $k>0$ be the largest positive integer such that $L_k = \prod_{i=1}^{k} l_i \leq \sqrt[12]{X}$.
Then, 
\begin{align*}
 4l^{p}(l-1)^2\prod_{i=1}^{k}(l_i^{10}-1)\left(\floor*{\frac{\sqrt[3]{X}}{l^{p+1} L_k^{4}}} \floor*{\frac{\sqrt{X}}{l^{p+1} L_k^6}} - \frac{X^{5/6}}{9l^{2p+2} L_k^{10} l_k^9}\right) \leq \# \cE_l^{I_{p}}(X) &\\
 \leq 4l^{p}(l-1)^2\prod_{i=1}^{k}(l_i^{10}-1)\left(\floor*{\frac{\sqrt[3]{X}}{l^{p+1} L_k^{4}}} \floor*{\frac{\sqrt{X}}{l^{p+1} L_k^6}} + \frac{\sqrt[3]{X}}{3l^{p+1} L_k^{4} l_k^3} + \frac{\sqrt{X}}{5l^{p+1} L_k^{6} l_k^5}\right)&.
\end{align*}
\end{Lemma}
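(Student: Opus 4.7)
The plan is a direct lattice-point count applied to the parametrization in \eqref{correspondence}. The key ingredients are local counts at each relevant prime, combined by CRT and converted into a box count with explicit boundary errors.

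First, I compute the local contribution at $l$. Working modulo $l^{p+1}$ in each coordinate, I count pairs $(A,B)$ with $l\nmid A$, $l\nmid B$, and $v_l(4A^3+27B^2)=p$ exactly. Fix $A$ with $l\nmid A$ and set $c=-4A^3/27\in(\Z/l^{p+1})^{\times}$; the admissible $B$ are precisely those satisfying $B^2\equiv c\pmod{l^p}$ but $B^2\not\equiv c\pmod{l^{p+1}}$. When $c$ is a non-square modulo $l$ there are none; when $c$ is a square, Hensel's lemma yields two solutions modulo $l^p$, each of which lifts to $l$ residues modulo $l^{p+1}$, and exactly two of these $2l$ lifts satisfy the stronger congruence, leaving $2(l-1)$ admissible $B$ for each such $A$. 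A quick QR tally, using that $A^3$ and $A$ have the same quadratic-residue status mod $l$, shows that exactly $(l-1)/2$ residues $A\pmod l$ make $c$ a QR, hence $(l-1)l^p/2$ residues $A\pmod{l^{p+1}}$ do. The local count at $l$ is therefore $l^p(l-1)^2$.

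Next, I handle the minimality conditions at primes $l_i\ne l$. The condition $(A,B)\not\equiv(0,0)\pmod{l_i^4\times l_i^6}$ admits $l_i^{10}-1$ residues out of $l_i^{10}$. At the prime $l$ itself minimality is implied by $l\nmid A$. For primes $q>\sqrt[12]{X}$ one has $q^4>\sqrt[3]{X}$ or $q^6>\sqrt{X}$, which forces $A=0$ or $B=0$ inside the box, and the point $(0,0)$ is excluded by $\Delta(A,B)\ne 0$. By the Chinese Remainder Theorem the local counts multiply, producing $l^p(l-1)^2\prod_{i=1}^k(l_i^{10}-1)$ admissible residue classes $(A,B)$ modulo $l^{p+1}L_k^4$ in $A$ and $l^{p+1}L_k^6$ in $B$.

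Finally, I convert this density into a lattice-point count inside $|A|\le\sqrt[3]{X}$, $|B|\le\sqrt{X}$. The admissible set is invariant under the symmetry $(A,B)\mapsto(A,-B)$, so the full box count equals twice the count over the half-box $B\ge 0$. In this half-box, each admissible residue class meets $[-\sqrt[3]{X},\sqrt[3]{X}]$ in $\lfloor 2\sqrt[3]{X}/(l^{p+1}L_k^4)\rfloor$ or one more integers, and meets $[0,\sqrt{X}]$ in $\lfloor\sqrt{X}/(l^{p+1}L_k^6)\rfloor$ or one more. Multiplying and summing over the $l^p(l-1)^2\prod_i(l_i^{10}-1)$ admissible classes produces the main term $4l^p(l-1)^2\prod_i(l_i^{10}-1)\lfloor\sqrt[3]{X}/(l^{p+1}L_k^4)\rfloor\lfloor\sqrt{X}/(l^{p+1}L_k^6)\rfloor$ stated in the lemma.

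The principal obstacle is matching the exact form of the perimeter corrections. The upper-bound terms $\sqrt[3]{X}/(3l^{p+1}L_k^4 l_k^3)$ and $\sqrt{X}/(5l^{p+1}L_k^6 l_k^5)$ arise from the ``plus one'' increments of the floor functions combined with partial-summation bounds on $\sum_{q>l_k}q^{-4}$ and $\sum_{q>l_k}q^{-6}$, which account for the primes $q\in(l_k,\sqrt[12]{X}]$ sieved by Tate's algorithm but not absorbed into $L_k$. The lower-bound correction $X^{5/6}/(9l^{2p+2}L_k^{10}l_k^9)$ similarly absorbs the over-count coming from pairs in the box non-minimal at such a prime $q$, bounded by $\sum_{q>l_k}(2\sqrt[3]{X}/q^4)(2\sqrt{X}/q^6)\le X^{5/6}/(9l_k^9)$ after extracting the density factors at $l$ and $l_1,\dots,l_k$. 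Keeping these boundary and tail contributions aligned with the stated inequalities is the main bookkeeping task.
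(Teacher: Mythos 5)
Your proof follows essentially the same route as the paper's: a local count at $l$ giving $l^p(l-1)^2$ admissible residue classes modulo $l^{p+1}$, local minimality counts of $l_i^{10}-1$ at the first $k$ primes, CRT to combine them, a floor-function box count for the main term, and integral comparison bounds on the tail over primes $q\in(l_k,\sqrt[12]{X}]$ (your quadratic-residue derivation of the count at $l$ is a more self-contained variant of the Hensel-lift count the paper cites from Watkins, but gives the same answer). The one slip worth flagging is in your sketch of the lower-bound correction: the term $\frac{X^{5/6}}{9l^{2p+2}L_k^{10}l_k^9}$ is obtained by bounding each summand $\floor*{\frac{\sqrt[3]{X}}{l^{p+1}q^4L_k^4}}\floor*{\frac{\sqrt{X}}{l^{p+1}q^6L_k^6}}$ above by $\frac{X^{5/6}}{l^{2p+2}L_k^{10}q^{10}}$ inside the inclusion-exclusion (the overall factor $4$ is applied once, outside the parentheses), whereas your $\sum_q(2\sqrt[3]{X}/q^4)(2\sqrt{X}/q^6)$ introduces an extraneous factor of $4$ that would not reproduce the stated constant.
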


\begin{proof}
To obtain the estimate on the size of the set $\cE_l^{I_p}(X)$, we use the description of the set in \eqref{correspondence}.
Observe that $l^{p+1} \nmid 4A^3 + 27B^2$, hence the condition $4A^3 + 27B^2 \neq 0$ is inherent in the definition of $\cE_l^{I_p}(X)$.

Consider the congruence equation $4A^3 + 27B^2 \equiv 0 \pmod{l}$.
It has $l-1$ non-singular solutions, which lift to $l^{p-1}(l-1)$ solutions modulo $l^{p}$ (see for example \cite[\S 3.4.1]{Wat08}).
Note that the description of the set in \eqref{correspondence} says that $l\nmid A$ and $l\nmid B$; thereby allowing us to ignore the point $(0,0)$. 

In view of \eqref{correspondence}, we are interested in solutions modulo $l^{p}$ that fail to satisfy the congruence equation modulo $l^{p+1}$. Since the $l-1$ non-singular solutions lift to $l^{p-1}(l-1)$ solutions modulo $l^p$ and $l^p(l-1)$ solutions modulo $l^{p+1}$, it follows that the number of pairs $(A,B)\in \Z/l^{p+1}\times \Z/l^{p+1}$ such that
\begin{enumerate}
    \item $(A,B)\not \equiv (0,0)\mod{l}$,
    \item $4A^3+27B^2\equiv 0\mod{l^p}$,
    \item $4A^3+27B^2\not \equiv 0\mod{l^{p+1}}$,
\end{enumerate}
is equal to 
\[l^2\cdot (l^{p-1}(l-1))-l^p(l-1)=l^p(l-1)^2.\]
Therefore, $\cE_l^{I_{p}}(X)$ has $l^p(l-1)^2$ pairs of residue classes in $\Z/l^{p+1}\times \Z/l^{p+1}$.
Next, we need to count the number of lifts of each such pair under the additional condition that $(A,B)\neq (0,0)\in \Z/l_i^4 \times \Z/l_i^6$ for each prime $l_i$.
Note that the number of pairs $(A,B)$ satisfying this additional condition is $(l_i^{10}-1)$.

It follows that the number of pairs $(A,B)$ in the box $[-\sqrt[3]{X},\sqrt[3]{X}]\times[-\sqrt{X},\sqrt{X}]$ such that $(A,B)\neq (0,0)\in \Z/l^{p+1} \times \Z/l^{p+1}$, $(A,B)\neq (0,0)\in \Z/l_i^4 \times \Z/l_i^6$ and $4A^3 + 27B^2 \equiv 0 \mod l^p$ is
\[
 4l^{p}(l-1)^2 \prod_{i=1}^k\left( l_i^{10}-1\right)\floor*{\frac{\sqrt[3]{X}}{l^{p+1} L_k^4}}\floor*{\frac{\sqrt{X}}{l^{p+1} L_k^6}}.
\]
Our estimate so far might include pairs $(A,B)$ such that it is $(0, 0) \in \Z/q^4 \times \Z/q^6$ when $l_k < q \leq \sqrt[12]{X}$.
So, we must exclude the integral pairs which reduce to $(0,0)\in \Z/q^4 \times \Z/q^6$ for $l_k < q \leq \sqrt[12]{X}$.
Therefore, we need to remove 
\[
4l^{p}(l-1)^2 \prod_{i=1}^k\left( l_i^{10}-1\right)\sum_{l_k <q\leq \sqrt[12]{X} }\floor*{\frac{\sqrt[3]{X}}{l^{p+1} q^4 L_k^4}}\floor*{\frac{\sqrt{X}}{l^{p+1} q^6 L_k^6}}
\]
many pairs from our count.
Putting this together, we get 
\begin{align*}
\# \cE_l^{I_{p}}(X) & = 4l^{p}(l-1)^2 \prod_{i=1}^k\left( l_i^{10}-1\right)\floor*{\frac{\sqrt[3]{X}}{l^{p+1} L_k^4}}\floor*{\frac{\sqrt{X}}{l^{p+1} L_k^6}}\\
& - 4l^{p}(l-1)^2 \prod_{i=1}^k\left( l_i^{10}-1\right)\sum_{l_k <q\leq \sqrt[12]{X} }\floor*{\frac{\sqrt[3]{X}}{l^{p+1} q^4 L_k^4}}\floor*{\frac{\sqrt{X}}{l^{p+1} q^6 L_k^6}}.
\end{align*}
To obtain the expression as in the statement of the lemma, we will need to manipulate the following term
\[
\sum_{l_k <q\leq \sqrt[12]{X} }\floor*{\frac{\sqrt[3]{X}}{l^{p+1} q^4 L_k^4}}\floor*{\frac{\sqrt{X}}{l^{p+1} q^6 L_k^6}}.
\]
First we consider its upper bound.
\begin{align*}
    \sum_{l_k <q\leq \sqrt[12]{X} }\floor*{\frac{\sqrt[3]{X}}{l^{p+1} q^4 L_k^4}}\floor*{\frac{\sqrt{X}}{l^{p+1} q^6 L_k^6}} & \leq \frac{X^{5/6}}{l^{2(p+1)}L_k^{10}}\sum_{l_k < q \leq \sqrt[12]{X}} 1/q^{10}\\
    & \leq \frac{X^{5/6}}{l^{2(p+1)}L_k^{10}} \int_{l_k}^{\infty} 1/x^{10} dx\\
    & \leq  \frac{X^{5/6}}{9 l^{2(p + 1)}l_k^9 L_k^{10}}.
\end{align*}
Now for the lower bound, observe that
\begin{align*}
    \sum_{l_k <q\leq \sqrt[12]{X} }\floor*{\frac{\sqrt[3]{X}}{l^{p+1} q^4 L_k^4}}\floor*{\frac{\sqrt{X}}{l^{p+1} q^6 L_k^6}} & \geq \sum_{l_k < q \leq \sqrt[12]{X}} \left( \frac{\sqrt[3]{X}}{l^{p+1}q^4 L_k^4} - 1 \right)\left( \frac{\sqrt{X}}{l^{p+1}q^6 L_k^6} - 1 \right)\\
    & \geq - \frac{\sqrt[3]{X}}{l^{p+1}L_k^4} \sum_{l_k < q \leq \sqrt[12]{X}} 1/q^4 - \frac{\sqrt{X}}{l^{p+1}L_k^6} \sum_{l_k < q \leq \sqrt[12]{X}} 1/q^6\\
    & \geq -\frac{\sqrt[3]{X}}{3l^{p+1} L_k^{4} l_k^3} - \frac{\sqrt{X}}{5l^{p+1} L_k^{6} l_k^5}.
\end{align*}    
The result is now immediate.
\end{proof}
The following result follows from the previous lemmas in this section.
\begin{Th}
\label{the dp2 estimate}
With notation as above,
\begin{align*}
\limsup_{X\rightarrow \infty} \frac{\# \cE^{I_{p}}_l(X)}{\# \cE(X)} &\leq \frac{(l-1)^2}{l^{p + 2}}.
\end{align*}
\end{Th}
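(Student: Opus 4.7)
The plan is to combine the upper bound from the preceding lemma with Brumer's asymptotic $\#\mathcal{E}(X) = 4X^{5/6}/\zeta(10) + O(\sqrt{X})$ and extract the dominant term. Starting from
\[
\#\mathcal{E}_l^{I_p}(X) \leq 4l^{p}(l-1)^2\prod_{i=1}^{k}(l_i^{10}-1)\left(\left\lfloor \frac{\sqrt[3]{X}}{l^{p+1} L_k^{4}}\right\rfloor \left\lfloor \frac{\sqrt{X}}{l^{p+1} L_k^6}\right\rfloor + \frac{\sqrt[3]{X}}{3l^{p+1} L_k^{4} l_k^3} + \frac{\sqrt{X}}{5l^{p+1} L_k^{6} l_k^5}\right),
\]
I would first drop the floors at a cost of a negligible $O(1)$ term in each factor, so that the leading contribution becomes
\[
4l^{p}(l-1)^2\prod_{i=1}^{k}(l_i^{10}-1)\cdot\frac{X^{5/6}}{l^{2p+2} L_k^{10}} \;=\; \frac{4(l-1)^2 X^{5/6}}{l^{p+2}}\prod_{i=1}^{k}\left(1-l_i^{-10}\right),
\]
after rewriting $\prod_{i=1}^k(l_i^{10}-1)=L_k^{10}\prod_{i=1}^k(1-l_i^{-10})$ so that $L_k^{10}$ cancels cleanly.

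Next I would control the two error terms. Using $L_k^6 \leq \sqrt{X}$ and $L_k^4 \leq \sqrt[3]{X}$, the contributions
\[
4l^{p}(l-1)^2\prod_{i=1}^{k}(l_i^{10}-1)\cdot\frac{\sqrt[3]{X}}{3l^{p+1} L_k^{4} l_k^3}, \qquad 4l^{p}(l-1)^2\prod_{i=1}^{k}(l_i^{10}-1)\cdot\frac{\sqrt{X}}{5l^{p+1} L_k^{6} l_k^5}
\]
are both $O\!\bigl(X^{5/6}/l_k^3\bigr)$. Since $k$ is chosen maximal with $L_k \leq X^{1/12}$, we have $l_k\to\infty$ as $X\to\infty$, so both error contributions are $o(X^{5/6})$.

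Finally, I would divide by Brumer's estimate and pass to the limit. As $X\to\infty$, $k\to\infty$ and by the Euler product formula $\prod_{i=1}^\infty(1-l_i^{-10}) = 1/\zeta(10)$, giving
\[
\limsup_{X\to\infty}\frac{\#\mathcal{E}_l^{I_p}(X)}{\#\mathcal{E}(X)} \;\leq\; \frac{\dfrac{4(l-1)^2}{l^{p+2}}\cdot\dfrac{1}{\zeta(10)}}{\dfrac{4}{\zeta(10)}} \;=\; \frac{(l-1)^2}{l^{p+2}},
\]
as required. There is no real obstacle here beyond bookkeeping; the one point requiring attention is verifying that the $L_k^{10}$ appearing in the prefactor cancels exactly against the $L_k^{10}$ in the denominator of the main term, which is what makes the otherwise $k$-dependent bound converge to a clean closed-form limit via the Euler product.
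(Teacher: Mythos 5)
Your proof is correct and follows exactly the route the paper intends: combine Sadek's upper bound with Brumer's asymptotic, cancel the $L_k^{10}$ factor, show the error terms are $o(X^{5/6})$ via $l_k\to\infty$, and recognize the Euler product for $1/\zeta(10)$. One minor simplification: since $\lfloor x\rfloor\lfloor y\rfloor\leq xy$, dropping the floors costs nothing in an upper bound, so the ``negligible $O(1)$'' bookkeeping you flag is not actually needed.
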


\begin{proof}
The result follows from using the upper bound of $\cE_l^{I_p}(X)$ and Lemma \ref{Brumer}. 
\end{proof}

\begin{Remark}
When the elliptic curves are ordered by conductor (rather than height), the same bounds have been obtained in \cite[Theorem 1.6]{SSW19+}.
\end{Remark}

\subsection{Average results on anomalous primes}
We fix a prime $p\geq 5$.
Let $\mathcal{W}$ consist of tuples $(A,B)\in \Z\times \Z$, where $(A,B)$ is identified with the (minimal) Weierstrass equation 
\[y^2=x^3+Ax+B.\]
Denote by $\mathcal{W}(X)$ the set of Weierstrass equations for which the height is $\leq X$.
Note that $\mathcal{E}(X)$ is a subset of $\mathcal{W}(X)$ such that 
\begin{equation}
\label{99.9 are glob min}
\lim_{X\rightarrow \infty} \frac{\mathcal{E}(X)}{\mathcal{W}(X)}=\frac{1}{\zeta(10)}
\end{equation}
(see \cite{CS20}). 
Thus $99.9\%$ of Weierstrass equations are globally minimal.

Let $\kappa=(a,b)\in \F_p\times \F_p$ with $\Delta(\kappa)\neq 0$. 
Let $E_{\kappa}$ be the elliptic curve defined by the Weierstrass equation \[E_{\kappa}:y^2=x^3+ax+b.\]
Note that $\kappa$ is not uniquely determined by $E_{\kappa}$.

\begin{Lemma}\label{lastlemma}
Let $\kappa$ be a pair and $\mathcal{W}_{\kappa}(X)\subset \mathcal{W}(X)$ be the subset of Weierstrass equations $y^2=x^3+Ax+B$ such that the pair $(A,B)$ reduces to $\kappa$.
Then, 
\[\limsup_{X\rightarrow \infty}\frac{\mathcal{W}_{\kappa}(X)}{\mathcal{E}(X)}\leq \frac{\zeta(10)}{p^2}.\]
\end{Lemma}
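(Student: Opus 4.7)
The plan is to obtain an explicit upper bound on $\#\mathcal{W}_\kappa(X)$ by a direct lattice-point count, and then divide by Brumer's estimate for $\#\mathcal{E}(X)$ recalled in Lemma~\ref{Brumer}. A Weierstrass pair $(A,B) \in \Z^2$ belongs to $\mathcal{W}(X)$ precisely when $|A|\leq X^{1/3}$, $|B| \leq X^{1/2}$, and $4A^3 + 27 B^2 \neq 0$. To belong additionally to $\mathcal{W}_\kappa(X)$, the pair must reduce to $(a,b) \pmod p$; that is, $A$ is constrained to a single residue class modulo $p$, and similarly for $B$.

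Counting integers in $[-X^{1/3}, X^{1/3}]$ lying in a fixed class modulo $p$ gives at most $2X^{1/3}/p + 1$ choices for $A$, and similarly $2X^{1/2}/p + 1$ choices for $B$. Dropping the non-vanishing discriminant condition (which only decreases the count) would yield
\[
\#\mathcal{W}_\kappa(X) \;\leq\; \left(\frac{2X^{1/3}}{p} + 1\right)\left(\frac{2X^{1/2}}{p} + 1\right) \;=\; \frac{4 X^{5/6}}{p^2} + O(X^{1/2}).
\]
Combining this with Lemma~\ref{Brumer}, which gives $\#\mathcal{E}(X) = \tfrac{4 X^{5/6}}{\zeta(10)} + O(X^{1/2})$, one obtains
\[
\frac{\#\mathcal{W}_\kappa(X)}{\#\mathcal{E}(X)} \;\leq\; \frac{4X^{5/6}/p^2 + O(X^{1/2})}{4X^{5/6}/\zeta(10) + O(X^{1/2})},
\]
and letting $X\to\infty$ yields the claimed bound $\zeta(10)/p^2$ on the limit superior.

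There is no serious obstacle in this argument. The hypothesis $\Delta(\kappa) \neq 0$ is never actually used, since we only seek an upper bound and are free to overcount by ignoring the smoothness condition on $(A,B)$. The entire content reduces to the elementary observation that the congruence condition $(A,B) \equiv (a,b) \pmod p$ cuts out a sublattice of $\Z^2$ of index $p^2$, and that the box defined by $|A| \leq X^{1/3}$, $|B| \leq X^{1/2}$ has area of order $X^{5/6}$.
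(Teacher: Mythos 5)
Your proof is correct and follows essentially the same route as the paper. Both arguments come down to the observation that the congruence condition $(A,B)\equiv\kappa\pmod p$ cuts out a translate of a sublattice of $\Z^2$ of index $p^2$, giving $\#\mathcal{W}_\kappa(X)\sim\frac{1}{p^2}\#\mathcal{W}(X)$; the paper then divides by $\#\mathcal{E}(X)$ using the density $\lim_{X\to\infty}\mathcal{E}(X)/\mathcal{W}(X)=1/\zeta(10)$ recorded in \eqref{99.9 are glob min}, while you substitute the numerically equivalent Brumer estimate for $\#\mathcal{E}(X)$ directly --- the same computation in a slightly different packaging.
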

\begin{proof}
Observe that $\mathcal{W}_{(0,0)}$ is the lattice in $\Z\times \Z$ with lattice basis $(p,0)$ and $(0,p)$.
Since $\mathcal{W}_{\kappa}$ is simply a translation of $\mathcal{W}_{(0,0)}$, it follows that
\[\lim_{X\rightarrow \infty} \frac{\mathcal{W}_{\kappa}(X)}{\mathcal{W}(X)}=\frac{1}{p^2}.\]
The result follows from $\eqref{99.9 are glob min}$.
\end{proof}
Denote by $\mathfrak{S}$ the set of pairs $\kappa=(a,b)\in \F_p\times \F_p$ such that $E_{\kappa}$ contains a point of order $p$ over $\F_p$.
Recall that $d(p):=\# \mathfrak{S}$.
Let $\mathcal{W}'(X)\subset \mathcal{W}(X)$ be the set of Weierstrass equations $y^2=x^3+Ax+B$ which reduce to $E_{\kappa}$ for some $\kappa\in \mathfrak{S}$.
\begin{Th}
\label{last result}
We have that $\mathfrak{d}_p^{(3)}\leq \zeta(10)\cdot \frac{d(p)}{p^2}$.
\end{Th}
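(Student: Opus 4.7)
The plan is to observe that membership in $\mathcal{E}_3(X)$ is determined entirely by the residues of the Weierstrass coefficients modulo $p$, and then apply Lemma \ref{lastlemma} to each relevant residue class.

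First I would unpack the definitions. For an elliptic curve $E : y^2 = x^3 + Ax + B$ in $\mathcal{E}(X)$, the quantity $\delta_p(E) = \#\widetilde{E}(\F_p)[p]$ is divisible by $p$ exactly when $\widetilde{E}(\F_p)$ contains a point of order $p$. In particular this forces $\Delta(A \bmod p,\, B \bmod p) \neq 0$, so $E$ has good reduction at $p$, and the pair $\kappa_E := (A \bmod p,\, B \bmod p)$ lies in $\mathfrak{S}$; conversely, any $E$ with $\kappa_E \in \mathfrak{S}$ reduces to $E_{\kappa_E}$, which by hypothesis has a point of order $p$. This yields the decomposition
\[
\mathcal{E}_3(X) \;=\; \bigsqcup_{\kappa \in \mathfrak{S}} \bigl(\mathcal{W}_\kappa(X) \cap \mathcal{E}(X)\bigr),
\]
and hence the union bound $\#\mathcal{E}_3(X) \leq \sum_{\kappa \in \mathfrak{S}} \#\mathcal{W}_\kappa(X)$.

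Next I would divide by $\#\mathcal{E}(X)$ and take the $\limsup$ as $X \to \infty$. Since $\mathfrak{S}$ is a finite set of cardinality $d(p)$, the $\limsup$ can be brought inside the sum, and Lemma \ref{lastlemma} bounds each summand by $\zeta(10)/p^2$, yielding
\[
\mathfrak{d}_p^{(3)} \;\leq\; \sum_{\kappa \in \mathfrak{S}} \frac{\zeta(10)}{p^2} \;=\; \zeta(10)\cdot \frac{d(p)}{p^2}.
\]
There is no real obstacle: the argument is a union bound combined with the density estimate already established in Lemma \ref{lastlemma}. The only minor point worth verifying is that the good reduction $\widetilde{E}$ over $\F_p$ can be read off directly from $(A \bmod p,\, B \bmod p)$, which is immediate since $p \geq 5$ makes the Weierstrass equation \eqref{weier} minimal at $p$.
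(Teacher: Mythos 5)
Your proposal is correct and follows essentially the same route as the paper: the paper defines $\mathcal{W}'(X)=\bigcup_{\kappa\in\mathfrak{S}}\mathcal{W}_{\kappa}(X)$, notes $\mathcal{E}_3(X)\subseteq\mathcal{W}'(X)$, and applies Lemma~\ref{lastlemma} via a union bound over the $d(p)$ classes, which is exactly your decomposition plus union bound. Your write-up is merely more explicit about why membership in $\mathcal{E}_3(X)$ is determined by $(A\bmod p,\, B\bmod p)$ and about bringing the $\limsup$ inside the finite sum; the paper takes these steps as immediate.
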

\begin{proof}
It follows from Lemma $\ref{lastlemma}$ that 
\[\limsup_{X\rightarrow \infty}\frac{\mathcal{W}'(X)}{\mathcal{E}(X)}\leq \zeta(10)\cdot\frac{d(p)}{p^2}.\]
Recall that $\mathcal{E}_3(X)\subseteq \mathcal{W}'(X)$.
The result follows.
\end{proof}

\section*{Acknowledgments}
DK thanks J. Balakrishnan, I. Varma, and N. Kaplan for helpful discussions.
She acknowledges the support of the PIMS Postdoctoral Fellowship.
AR is grateful to R. Sujatha for informing him of the compelling role of the generalized Euler characteristic in Iwasawa theory.
We thank the referee for timely reading of the paper and expert suggestions.

\bibliographystyle{abbrv}
\bibliography{references}
\end{document}